\numberwithin{equation}{section}
\newtheorem{maintheorem}{Theorem}	
\newtheorem{maincorollary}[maintheorem]{Corollary}								
\newtheorem{theorem}{Theorem}[section]
\newtheorem{lemma}[theorem]{Lemma}
\newtheorem{proposition}[theorem]{Proposition}
\theoremstyle{definition}
\newtheorem{definition}[theorem]{Definition}
\newtheorem{question}[theorem]{Question}
\newtheorem{example}[theorem]{Example}
\newtheorem{notation}[theorem]{Notation}
\newtheorem{remark}[theorem]{Remark}
\newtheorem{remarks}[theorem]{Remarks}
\newcommand{\hooklongrightarrow}{\lhook\joinrel\longrightarrow}
\newcommand{\R}{\mathbb{R}}
\newcommand{\Z}{\mathbb{Z}}
\newcommand{\N}{\mathbb{N}}
\newcommand{\PP}{\mathbb{P}}
\newcommand{\NN}{\mathbb{N}}
\newcommand{\G}{\mathbb{G}}
\newcommand{\TT}{\mathbb{T}}
\newcommand{\calA}{\mathcal{A}}
\newcommand{\calD}{\mathcal{D}}
\newcommand{\calM}{\mathcal{M}}
\newcommand{\calO}{\mathcal{O}}
\newcommand{\calU}{\mathcal{U}}
\newcommand{\calV}{\mathcal{V}}
\newcommand{\calX}{\mathcal{X}}
\newcommand{\calZ}{\mathcal{Z}}
\newcommand{\calMbar}{\overline{\mathcal{M}}}
\DeclareMathOperator{\Pic}{Pic}
\DeclareMathOperator{\Spec}{Spec}
\DeclareMathOperator{\Hom}{Hom}
\DeclareMathOperator{\Aut}{Aut}
\DeclareMathOperator{\trop}{trop}
\DeclareMathOperator{\val}{val}
\DeclareMathOperator{\Trop}{Trop}
\DeclareMathOperator{\Div}{Div}
\DeclareMathOperator{\calDiv}{\mathcal{D}iv}
\DeclareMathOperator{\calDivbar}{\overline{\mathcal{D}iv}}
\DeclareMathOperator{\Gr}{Gr}
\DeclareMathOperator{\supp}{supp}
\DeclareMathOperator{\PDiv}{PDiv}
\DeclareMathOperator{\mdeg}{mdeg}
\DeclareMathOperator{\ev}{ev}
\let\Rat\relax
\DeclareMathOperator{\Rat}{Rat}
\let\div\relax
\DeclareMathOperator{\div}{div}
\let\ord\relax
\DeclareMathOperator{\ord}{ord}
\title[Symmetric powers of algebraic and tropical curves]{Symmetric powers of algebraic and tropical curves: A~non-Archimedean perspective}
\author{Madeline Brandt}
\address{Department of Mathematics, University of California, Berkeley, 970 Evans Hall, Berkeley, CA 94720}
\email{\href{mailto:brandtm@berkeley.edu}{brandtm@berkeley.edu}}
\author{Martin Ulirsch}
\address{Institut f\"ur Mathematik, Goethe-Universit\"at Frankfurt, 60325 Frankfurt am Main, Germany, and \\
Mathematics Institute, University of Warwick, Coventry CV4 7AL, United Kingdom}
\email{\href{mailto:ulirsch@math.uni-frankfurt.de}{ulirsch@math.uni-frankfurt.de}}
\subjclass[2010]{14T05; 14G22}
\begin{document}

\begin{abstract} 
We show that the non-Archimedean skeleton of the $d$-th symmetric power of a smooth projective algebraic curve $X$ is naturally isomorphic to the $d$-th symmetric power of the tropical curve that arises as the non-Archimedean skeleton of $X$. The retraction to the skeleton is precisely the specialization map for divisors. Moreover, we show that the process of tropicalization naturally commutes with the diagonal morphisms and the Abel-Jacobi map and we exhibit a faithful tropicalization for symmetric powers of curves. Finally, we prove a version of the Bieri-Groves Theorem that allows us, under certain tropical genericity assumptions, to deduce a new tropical Riemann-Roch-Theorem for the tropicalization of linear systems. 
\end{abstract}

\maketitle

\setcounter{tocdepth}{1}
\tableofcontents


\section*{Introduction}

Throughout, let $K$ be a non-Archimedean field with valuation ring $R$ whose residue field $k$ is algebraically closed and contained in $K$. Let $X$ be a smooth projective curve over $K$ of genus $g$ and let $d\geq 0$. The \emph{$d$-th symmetric power $X_d$} of $X$ is defined to be the quotient 
\begin{equation*}
    X_d=X^d/S_d
\end{equation*}
of the $d$-fold product $X^d=X\times \cdots \times X$ by the action of the symmetric group $S_d$ that permutes the entries. The symmetric power $X_d$ is again a smooth and projective algebraic variety and functions as the fine moduli space of effective divisors of degree $d$ on $X$ (see \cite[Section 3]{Milne_Jacobian} for details).

Let $\Gamma=\Gamma_X$ be the dual tropical curve of $X$, i.e. the minimal skeleton of $X^{\text{an}}$. As a set, the \emph{$d$-th symmetric power} $\Gamma_d$ of $\Gamma$ is defined to be the quotient
\begin{equation*}
    \Gamma_d=\Gamma^d/S_d
\end{equation*}
of the $d$-fold product by $S_d$-action. We will see in Section~\ref{section_sympow} that, once we choose a semistable model $(G,\vert.\vert)$ for $\Gamma$, the symmetric power $\Gamma_d$ naturally carries the structure of a \emph{colored polysimplicial complex} and it naturally functions as a moduli space of effective divisors of degree $d$ on $\Gamma$.

Let $\calX$ be a semistable model of $X$ over $R$ that admits a section. The special fiber of $\calX$ is a semistable curve whose weighted dual graph (together with the edge lengths given by the valuations of the deformation parameters at every node) provides us with a natural choice of a model $\big(G,\vert.\vert\big)$ of $\Gamma$. There is a natural tropicalization map 
\begin{equation*}
    \trop_{X_d}\colon X_d^{an}\longrightarrow \Gamma_d
\end{equation*}
given by pushing forward an effective Cartier divisor $D$ on $X_L$, for a non-Archimedean extension $L$ of $K$, to the dual tropical curve $\Gamma_{X_L}=\Gamma_X$, which is essentially a version of Baker's specialization map for divisors in \cite{Baker_specialization} (see Section~\ref{section_specialization} for details). 

On the other hand, using a variation of the compactification of the moduli space of effective divisors over $\calM_g$ constructed in \cite[Section 2]{MoellerUlirschWerner_realizability}, a special case of the moduli space of stable quotients in   \cite{MarianOpreaPandharipande}, we find a polystable model $\overline{\Div}_d^+(\calX)$ of $X_d$ over $\Spec R$ that has a natural modular interpretation. The space $\overline{\Div}_d^+(\calX)$ is not the $S_d$-quotient of the fibered product $\calX\times_{\Spec R}\cdots\times_{\Spec R}\calX$ but rather a  resolution thereof with good moduli-theoretic properties. 

By \cite{Berkovich_analytic}, associated to the polystable model $\overline{\Div}_d^+(\calX)$ there is a strong deformation retraction 
\begin{equation*}
    \rho_{X_d}\colon X_d^{an}\longrightarrow \Sigma(X_d)
\end{equation*} 
onto the \emph{non-Archimedean skeleton} $\Sigma(X_d)$ of $X_d^{an}$, which naturally carries the structure of a colored polysimplicial complex. We refer the reader to Section~\ref{section_polystableskeletons} for a guide to this construction. Our main result is the following Theorem~\ref{thm_skeleton=symmetricpower}.

\begin{maintheorem}\label{thm_skeleton=symmetricpower}
Let $X$ be a smooth and projective algebraic curve over $K$. Let $\calX$ be a fixed semistable model of $X$ over the valuation ring $R$ of $K$ that admits a section.  Denote by $\Gamma$ the dual tropical curve of $X$. There is a natural isomorphism
\begin{equation*}
\mu_{X_d}\colon \Gamma_d\xlongrightarrow{\sim}\Sigma(X_d)
\end{equation*}
of colored polysimplicial complexes that makes the diagram
\begin{center}\begin{tikzcd}
 X_d^{an} \arrow[dr,"\rho_{X_d}"'] \arrow[drrr, bend left, "\trop_{X_d}"]& & &\\
 & \Sigma(X_d) & & \Gamma_d \arrow[ll, "\mu_{X_d}","\sim"']
\end{tikzcd}\end{center}
commute. 
\end{maintheorem}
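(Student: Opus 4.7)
The plan is to identify the face structures of $\Gamma_d$ and $\Sigma(X_d)$ by matching each to the stratification of the special fiber $\calX_{d,0}$, and then to deduce commutativity of the triangle from the modular interpretation of $\calX_d$. On the tropical side, I would first describe the colored polysimplicial structure of $\Gamma_d$ built in Section~\ref{section_sympow}: every point of $\Gamma_d$ has a combinatorial type given by an effective divisor $D_0 = \sum_v m_v [v] + \sum_e m_e [e]$ on $G$ with $\sum_v m_v + \sum_e m_e = d$, and the open face labeled by $D_0$ is (up to $S_d$-quotient) the product $\prod_e \sigma_e$ of open alcoves $\sigma_e = \{0 < t_1 < \cdots < t_{m_e} < |e|\}$, one for each edge with $m_e > 0$.

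On the algebraic side, I would match these combinatorial types to strata of $\calX_{d,0}$ using the modular interpretation of $\calX_d$ from~\cite{MoellerUlirschWerner_realizability}. A geometric point of $\calX_{d,0}$ is an effective degree-$d$ Cartier divisor on a semistable modification of $\calX_0$; the stratum it belongs to is determined by how many of its points accumulate at each node of $\calX_0$ (each accumulation bridging the corresponding edge with a chain of exceptional $\PP^1$-components) together with how the remaining points are distributed among the irreducible components. This data is exactly the type $D_0$, giving a canonical bijection between faces of $\Gamma_d$ and strata of $\calX_{d,0}$. By Berkovich's skeleton construction (Section~\ref{section_polystableskeletons}), each stratum contributes a polysimplex to $\Sigma(X_d)$ whose dimensions are read off from the \'etale-local equations of $\calX_d$. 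A local computation, using that $\calX_d$ is \'etale-locally a product of standard models indexed by the edges with $m_e > 0$, shows that this polysimplex is canonically isomorphic to $\prod_e \sigma_e$ via edge-length coordinates, yielding a face-wise isomorphism. Gluing over faces, and matching the coloring data recording which points originate on the same edge, produces the desired natural isomorphism $\mu_{X_d}$ of colored polysimplicial complexes.

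Commutativity of the diagram is then checked pointwise. For a geometric point $[D]$ of $X_d^{an}$ represented by an effective divisor $D$ on $X_L$ for some non-Archimedean extension $L/K$, the tropicalization $\trop_{X_d}([D])$ is Baker's specialization, which records the combinatorial type of $D$ together with the valuations of the nodal deformation parameters at smoothed nodes. The retraction $\rho_{X_d}([D])$ computes the same valuations as polysimplex coordinates from Berkovich's construction, and $\mu_{X_d}$ was defined precisely by matching these coordinates face by face, so the diagram commutes.

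The main obstacle is the local computation identifying the polysimplex of $\Sigma(X_d)$ at each stratum with the corresponding face of $\Gamma_d$. The naive fibered product $\calX \times_R \cdots \times_R \calX$ is not strictly polystable after passing to the $S_d$-quotient, so one must genuinely work with the MUW resolution $\calX_d$ and analyze its \'etale-local equations at each stratum in sufficient detail to verify that the polysimplex coordinates coincide with the edge-length coordinates on $\Gamma_d$. A further subtlety is verifying that the face-stratum bijection respects the \emph{coloring}, i.e.\ the refined gluing data recording how many divisor points come from the same edge, which again hinges on the modular interpretation of $\calX_d$ rather than on the polystable combinatorics alone.
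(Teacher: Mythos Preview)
Your proposal follows essentially the same route as the paper: identify the face poset of $\Gamma_d$ with the stratification of $\calX_{d,0}$ via the modular interpretation of $\calX_d$ (the paper's Proposition~\ref{prop_skel=sympow}), and then verify commutativity by checking that Berkovich's polysimplex coordinates coincide with the valuations of the nodal deformation parameters $t_j^{(i)}$, which are precisely the sub-edge lengths in $\Gamma$. The only refinement the paper adds is to index faces by \emph{stable pairs} $(G',D)$ over $G$ rather than by your divisors $D_0=\sum m_v[v]+\sum m_e[e]$; the stable-pair language also records multiplicities at the exceptional vertices and hence captures the full face poset (your $D_0$ as written labels only the maximal cells), but the argument is otherwise identical.
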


In other words, the symmetric power $\Gamma_{d}$ of the skeleton $\Gamma=\Gamma_X$ of $X^{an}$ is isomorphic to the skeleton $\Sigma(X_d)$ of the symmetric power $X_d^{an}$ of $X^{an}$. The main idea of our proof is to carefully describe the combinatorial structure stratification of the polystable model of $X_d$ coming from \cite{MoellerUlirschWerner_realizability} and to identify it with the discrete data of $\Gamma_d$ (thought of as the space $\Div_d^+(\Gamma)$ of effective divisors on $\Gamma$). 

\subsection*{An alternative to Theorem \ref{thm_skeleton=symmetricpower}} A slightly different version of Theorem \ref{thm_skeleton=symmetricpower} has appeared in \cite{Shen_Lefschetz} en route to the proof of a non-Archimedean Lefschetz hyperplane theorem for the the locus of effective divisors in the Picard group. In \cite[Section 6]{Shen_Lefschetz} the author first identifies the non-Archimedean skeleton of the $d$-fold product $X^d$ with the $d$-fold product using Berkovich's skeleton construction in \cite{Berkovich_analytic} and then shows that the deformation retraction is naturally $S_n$-invariant. This implies that the skeleton of the quotient $X_d^{an}=(X^d)^{an}/S_n$ is equal to $\Gamma_d$ as a set. We refer the reader to \cite{BrownMazzon} for more details on skeletons associated to products of degenerations.
The resulting polyhedral structure on the skeleton constructed in \cite{Shen_Lefschetz}, however, is not the one we introduce in Section \ref{section_sympow} below.
  
  In this article we prefer our approach via the explicit model $\overline{\Div}_d^+(\calX)$, since its modular interpretation simplifies the construction of the tropicalization map and the combinatorial stratification of its special fiber "explains" where the a priori only intrinsically defined polyhedral structure on $\Gamma_d$ is coming from. 

\subsection*{Tropicalization of subvarieties} Let $Y\subseteq X_d$ be a closed subvariety. We define the \emph{tropicalization} $\Trop_{X_d}(Y)$ of $Y$ to be the closed subset of $\Gamma_d$ given by the projection of $Y^{an}$ to $\Gamma_d$ via $\trop_{X_d}$, i.e. essentially via the specialization of effective divisors from $X$ to $\Gamma_X$ from \cite{Baker_specialization}. In other words, we set 
\begin{equation*}
 \Trop_{X_d}(Y):=\trop_{X_d}(Y) \ .   
\end{equation*}
By Theorem~\ref{thm_skeleton=symmetricpower} this is nothing but the projection of $Y^{an}$ to the skeleton of $X_d^{an}$ via $\rho_{X_d}$.

A surprisingly useful consequence of Theorem~\ref{thm_skeleton=symmetricpower} is that the continuity of $\rho_{X_d}$ implies the continuity of the tropicalization $\trop_{X_d}$. This allows us to deduce a collection of functoriality results in Section~\ref{section_functoriality} from the linearity of Baker's specialization map in  \cite{Baker_specialization} and from the compatibility of the process of tropicalization with the Abel-Jacobi map proved in \cite[Theorem 1.3]{BakerRabinoff_skelJac=Jacskel}. Moreover, the usual arguments from the proof of  \cite[Theorem 2.2.7]{EinsiedlerKapranovLind} (also see \cite[Proposition 3.5]{Gubler_guide}) immediately imply Corollary~\ref{cor_connectedness}. 

\begin{maincorollary}\label{cor_connectedness}
If $Y\subseteq X_d$ is connected, then the tropicalization $\Trop_{X_d}(Y)$ is connected as well.
\end{maincorollary}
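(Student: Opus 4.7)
The plan is to imitate the classical argument for tropicalizations of closed subvarieties of algebraic tori, as recorded in \cite[Theorem 2.2.7]{EinsiedlerKapranovLind} and \cite[Proposition 3.5]{Gubler_guide} and already advertised in the paragraph preceding the statement. That argument reduces to two ingredients: continuity of the tropicalization map, and connectedness of the Berkovich analytification of a connected variety. Only the first ingredient depends on the content of this paper; it is the payoff of Theorem~\ref{thm_skeleton=symmetricpower}.

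The central step is therefore to verify that the tropicalization map
\begin{equation*}
\trop_{X_d}\colon X_d^{an}\longrightarrow \Gamma_d
\end{equation*}
is continuous. This is not obvious from the definition of $\trop_{X_d}$ via Baker's specialization of divisors, which is a priori only a set-theoretic construction. However, Theorem~\ref{thm_skeleton=symmetricpower} provides the factorization $\trop_{X_d}=\mu_{X_d}\circ \rho_{X_d}$, where the strong deformation retraction $\rho_{X_d}\colon X_d^{an}\to \Sigma(X_d)$ onto the non-Archimedean skeleton is continuous by Berkovich's construction in \cite{Berkovich_analytic}, and $\mu_{X_d}\colon \Gamma_d\xrightarrow{\sim}\Sigma(X_d)$ is a homeomorphism of colored polysimplicial complexes. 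So $\trop_{X_d}$ is continuous as a composition of continuous maps.

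The second ingredient is the standard fact that, for a connected algebraic variety $Y$ over $K$, the Berkovich analytification $Y^{an}$ is connected as a topological space. Granted this, $\Trop_{X_d}(Y)=\trop_{X_d}(Y^{an})$ is by definition the image of a connected topological space under a continuous map, hence itself connected, as desired.

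I do not anticipate any genuine obstacle here: once Theorem~\ref{thm_skeleton=symmetricpower} is in hand the continuity of $\trop_{X_d}$ is essentially free, and the remaining step is a textbook point-set argument. The only place where one has to be a little careful is the passage from $Y$ to $Y^{an}$, but connectedness of the analytification of a connected variety is a well-known property of Berkovich's analytification functor and no new input is required.
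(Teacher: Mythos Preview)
Your proposal is correct and matches the paper's own approach exactly: the paper explicitly notes that the continuity of $\rho_{X_d}$ implies the continuity of $\trop_{X_d}$ via Theorem~\ref{thm_skeleton=symmetricpower}, and then invokes the standard argument from \cite[Theorem 2.2.7]{EinsiedlerKapranovLind} and \cite[Proposition 3.5]{Gubler_guide} without further elaboration. One tiny slip: the factorization should read $\trop_{X_d}=\mu_{X_d}^{-1}\circ\rho_{X_d}$ (since $\mu_{X_d}$ goes from $\Gamma_d$ to $\Sigma(X_d)$), but as $\mu_{X_d}$ is a homeomorphism this is immaterial.
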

\begin{proof}
Since $Y$ is connected $Y^{an}$ is also connected \cite[Theorem 3.4.8]{Berkovich_book}, and then continuity of the tropicalization map implies that $\Trop_{X_d}(Y)$ is connected.
\end{proof}

Denote by $\Lambda\subseteq \R$ the value group $-\log\vert K^\ast\vert$ of $K$. In Section~\ref{section_BieriGroves} we prove a generalization of the classical Bieri-Groves-Theorem (see \cite[Theorem A]{BieriGroves} and \cite[Theorem 2.2.3]{EinsiedlerKapranovLind}) for projections to the skeleton associated to a polystable model, which for $X_d$ can be stated as follows. 

\begin{maintheorem}\label{thm_BieriGrovessympow}
Let $X$ be a smooth projective curve over $K$ and let $\calX$ be a fixed semistable model of $X$ over $R$. Suppose that $Y\subseteq X_d$ is a closed subscheme that is is equidimensional of dimension $\delta$ such that $Y\cap X_d^\circ\neq\emptyset$, where $X_d^\circ$ parametrizes the reduced divisors on $X$. Then the tropicalization 
\begin{equation*}
\Trop_{X_d}(Y) := \trop_{X_d}(Y^{an})
\end{equation*}
of $Y$ is a $\Lambda$-rational polyhedral complex of dimension at most $\delta$. If $X$ has bad reduction and the tropicalization contains a point in the interior of a maximal cell of $X_d^{trop}$, then the dimension of $\Trop_{X_d}(Y)$ is equal to $\delta$.
\end{maintheorem}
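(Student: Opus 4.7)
The plan is to localize the statement at each polysimplex of $\Gamma_d$, apply the classical Bieri-Groves theorem inside an \'etale-local toric chart coming from the polystable model $\calX_d$, and then glue. Since $\Gamma_d$ is a finite polysimplicial complex and $\Trop_{X_d}(Y)$ equals $\rho_{X_d}(Y^{an})$ under the identification $\mu_{X_d}$ of Theorem~\ref{thm_skeleton=symmetricpower}, it suffices to show that for each polysimplex $\sigma_S \subseteq \Gamma_d$ the restriction $\Trop_{X_d}(Y) \cap \sigma_S$ is a $\Lambda$-rational polyhedral subset of $\sigma_S$ of dimension at most $\delta$, and to establish the dimension lower bound separately using the Mumford hypothesis.

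By the strict polystability of $\calX_d$, around each closed point of the stratum $S$ the model is \'etale-locally isomorphic to a product
\[
\prod_{i=1}^{r} \Spec R[x_0^{(i)}, \ldots, x_{n_i}^{(i)}]/\bigl(x_0^{(i)} \cdots x_{n_i}^{(i)} - \pi_i\bigr)
\]
with $\pi_i \in R \setminus \{0\}$ and $\sum_i n_i = \dim\sigma_S$. This product embeds naturally into the affine toric variety $\A^{n_1+1}_R \times \cdots \times \A^{n_r+1}_R$, whose generic fiber carries a dense torus $T_U$ of rank $\sum_i n_i + r$. Under this embedding the tube $\rho_{X_d}^{-1}(\sigma_S)$ is a Berkovich-analytic open in the analytification of this toric variety, and the restriction of $\rho_{X_d}$ to it agrees with the restriction of the standard toric tropicalization map to the polysimplex $\sigma_S = \prod_i \Delta_{n_i}$; this compatibility follows from the functoriality of Berkovich's skeleton construction in \cite{Berkovich_analytic}.

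The hypothesis $Y \cap X_d^\circ \neq \emptyset$ guarantees that in at least one such chart the intersection $Y^{an} \cap T_U^{an}$ is nonempty. I would then apply the classical Bieri-Groves theorem in its non-Archimedean analytic form (as in \cite[Theorem 2.2.3]{EinsiedlerKapranovLind} or \cite[Proposition 3.5]{Gubler_guide}) to the subvariety $Y \cap T_U$: its toric tropicalization is a $\Lambda$-rational polyhedral complex of pure dimension at most $\delta$ in $\R^{\sum n_i + r}$. Intersecting with $\sigma_S$ and gluing the resulting polyhedral sets over the finitely many polysimplices of $\Gamma_d$ endows $\Trop_{X_d}(Y)$ with the claimed global structure.

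For the dimension equality under the Mumford assumption, the hypothesis that $X$ is a Mumford curve ensures that no vertex of $\Gamma$ carries positive genus, so every maximal polysimplex of $\Gamma_d$ has dimension exactly $d$ and its interior is realized, in the local chart described above, as a full-dimensional relatively open subset of $\R^d$ cut out from the toric tropicalization by affine linear equations. A point in the interior of such a cell therefore corresponds to a Berkovich point over which the initial degeneration of $Y$ in the toric chart is a non-empty subvariety of pure dimension $\delta$ in a torus, which by the standard converse to Bieri-Groves forces $\dim \Trop_{X_d}(Y) = \delta$. The main technical obstacle I anticipate is matching the intrinsic polysimplicial coordinates on $\Sigma(\calX_d)$ from \cite{Berkovich_analytic} with the linear coordinates used by the toric tropicalization in each \'etale-local chart; once this compatibility is verified, the theorem reduces to standard toric tropical geometry.
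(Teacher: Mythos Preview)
Your proposal is essentially the same strategy as the paper's: localize via small \'etale charts of the polystable model $\calX_d$ into standard polystable models, identify the retraction $\rho_{X_d}$ with the restriction of the toric tropicalization on each chart, apply the classical Bieri--Groves theorem there, and glue. The paper in fact proves a more general statement (Theorem~\ref{thm_BieriGroves}) for any proper strictly polystable model and then specializes to $\calX_d$; your Mumford-curve interpretation of why maximal cells of $\Gamma_d$ are $d$-dimensional is exactly the translation of the paper's hypothesis ``deepest stratum is a point'' into the symmetric-power setting.

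Two points where the paper is more explicit than your sketch. First, the compatibility you flag as ``the main technical obstacle'' is isolated as Lemma~\ref{lemma_localtrop=actualtrop}, which checks directly that on $\calZ(\vec n,\vec k,\vec a)^\circ$ the local tropicalization $\trop_{\vec n,\vec k,\vec a}$ agrees with the ambient torus tropicalization $\trop_{\G_m^{|\vec n|}}$ restricted to the polysimplex. Second, the image $\gamma(Y\cap U)$ under the \'etale chart is only \emph{locally closed} in $\G_m^{|\vec n|}$, so Bieri--Groves does not apply to it directly; the paper passes to the closure $\overline{Y}_\gamma$ and invokes Draisma's tropical lifting lemma (\cite[Lemma~4.4]{Draisma_tropicalsecants}, \cite[Proposition~11.5]{Gubler_guide}) to identify the two tropicalizations. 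Your phrase ``apply Bieri--Groves to $Y\cap T_U$'' skips this step. For the dimension-equality direction the paper argues via the balancing condition on $\Trop_{\G_m^{|\vec n|}}(\overline{Y}_\gamma)$ rather than via initial degenerations, but the two arguments are interchangeable here.
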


The theory developed in this article allows us to study the tropical geometry of linear series by directly tropicalizing them as subvarieties of $X_d$. For example, Theorem~\ref{thm_BieriGrovessympow} immediately implies the following \emph{realizable Riemann-Roch Theorem}. 

\begin{maincorollary}\label{cor_rRR}
Let $X$ be an algebraic curve with bad reduction of genus $g$ and let $D$ be a divisor on $X$ of degree $d$. Let $|D| \subset X_d$ denote the linear series of $D$. Suppose that both $\Trop_{X_d}\big\vert D\big\vert$ and $\Trop_{X_d}\big\vert K_X-D\big\vert$ contain a point in the interior of a maximal cell of $\Gamma_d$. Then we have:
\begin{equation*}
\dim \Trop_{X_d} \big\vert D\big\vert - \dim\Trop_{X_d}\big\vert K_X-D\big\vert = d-g+1 \ . 
\end{equation*}
\end{maincorollary}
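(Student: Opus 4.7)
The plan is to reduce Corollary~\ref{cor_rRR} to the classical Riemann-Roch theorem on the smooth projective curve $X$, using Theorem~\ref{thm_BieriGrovessympow} as the bridge between algebraic and tropical dimensions. First, I would observe that the complete linear series $|D|$, viewed as a subvariety of the moduli space $X_d$ of effective degree $d$ divisors, is naturally identified with the projectivization $\PP H^0(X,\calO(D))$. Hence $|D|$ is a closed, irreducible, and thus equidimensional subvariety of $X_d$ of algebraic dimension $\dim |D|$. The analogous statement holds for $|K_X - D|$ as a subvariety of $X_{2g-2-d}$, of algebraic dimension $\dim |K_X - D|$.

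Second, I would verify that the tropical genericity hypothesis of Corollary~\ref{cor_rRR} is exactly the one needed for the second half of Theorem~\ref{thm_BieriGrovessympow}. Namely, via the isomorphism $\mu_{X_d}\colon \Gamma_d \cong \Sigma(X_d)$ from Theorem~\ref{thm_skeleton=symmetricpower}, and via the combinatorial description of the polysimplicial structure on $\Sigma(X_d)$ coming from the stratification of the special fiber of $\calX_d$, the interior of a maximal cell of $\Gamma_d$ corresponds to points of $X_d^{an}$ whose retraction lies in the open stratum $X_d^{\circ}$. The assumption that $\Trop_{X_d}|D|$ contains a point in the interior of a maximal cell of $\Gamma_d$ thus witnesses $|D|\cap X_d^{\circ}\neq \emptyset$, and analogously $|K_X - D|\cap X_{2g-2-d}^{\circ}\neq\emptyset$. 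Since $X$ is a Mumford curve, applying Theorem~\ref{thm_BieriGrovessympow} to each of these two equidimensional subvarieties yields
\[
\dim \Trop_{X_d}|D| = \dim|D|, \qquad \dim \Trop_{X_{2g-2-d}}|K_X - D| = \dim|K_X - D|.
\]

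Finally, I would invoke the classical Riemann-Roch theorem on $X$, namely $\dim|D| - \dim|K_X - D| = d - g + 1$, and combine it with the two equalities above to conclude. In this proof the only real content is the translation performed in the second step; the main obstacle, if any, is confirming the precise matching between the face lattice of the maximal cells of $\Gamma_d$ and the open stratum $X_d^{\circ}\subseteq X_d^{an}$, but this should follow directly from the explicit description of the polystable model $\calX_d$ and its skeleton developed earlier in the paper, so the proof itself amounts to a short combination of Theorem~\ref{thm_BieriGrovessympow} with the classical Riemann-Roch theorem.
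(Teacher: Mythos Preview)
Your proposal is correct and is exactly the argument the paper has in mind: the text states that Theorem~\ref{thm_BieriGrovessympow} ``immediately implies'' Corollary~\ref{cor_rRR}, and your three steps (identify $|D|$ and $|K_X-D|$ as equidimensional projective spaces inside the appropriate symmetric powers, check that the interior-point hypothesis is precisely the genericity condition in Theorem~\ref{thm_BieriGrovessympow}, then invoke classical Riemann--Roch) are the intended unpacking. One small remark: your phrasing ``points of $X_d^{an}$ whose retraction lies in the open stratum $X_d^{\circ}$'' conflates the skeleton with the stratification of the special fiber; what you actually need (and what Theorem~\ref{thm_BieriGroves} uses) is that a maximal cell of $\Gamma_d$ corresponds to a deepest stratum of $\calX_{d,0}$ which, because $X$ is a Mumford curve, is a point---but this is a cosmetic issue and the logic is sound.
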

 
By \cite{Baker_specialization}, the dimension of $\Trop_{X_d}\big\vert D\big\vert$ is not always equal to the rank of the specialization of $D$ to $\Gamma$. So, in particular, the realizable Riemann-Roch Theorem does, in general, not imply the well-known intrinsic tropical Riemann-Roch Theorem from \cite{BakerNorine, GathmannKerber, AminiCaporaso}. In the special case when $\Gamma$ is a generic chain of loops, however, the lifting results of \cite{CartwrightJensenPayne} allow us to say more.

\subsection*{Algebraic and tropical Riemann-Roch Theorem for generic chains of loops}\label{section_RiemannRoch}


 Let $\Gamma$ be a chain of $g$ loops, where each loop consists of two edges having lengths $l_i$ and $m_i$ (see Figure~\ref{fig:chainofloops}). Suppose that $\Gamma$ is a \emph{generic}, i.e. suppose that none of the ratios $l_i/m_i$ is equal to the ratio of two positive integers whose sum is less than or equal to $2g-2$ (see \cite[Definition 4.1]{CoolsDraismaPayneRobeva}). The results of \cite{CartwrightJensenPayne} show that the algebraic Riemann-Roch-Theorem implies the tropical Riemann-Roch-Theorem.
 
 \begin{figure}
     \centering
     \includegraphics[height = 1.7 in]{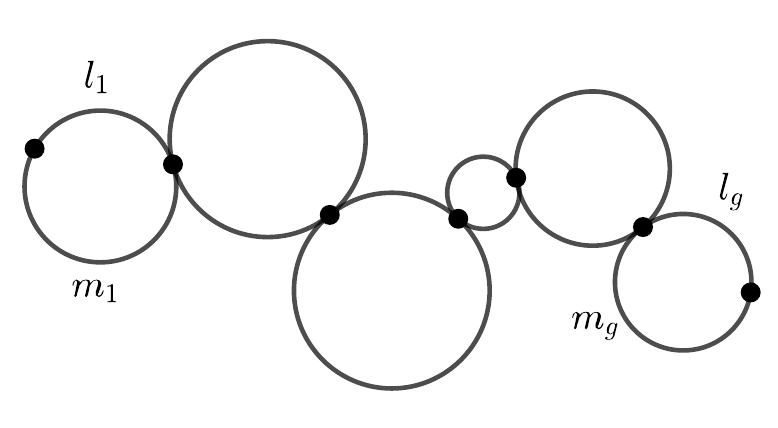}
     \caption{A chain of loops with edge lengths labeled.}
     \label{fig:chainofloops}
 \end{figure}

Let $D$ a divisor on $\Gamma$ of degree $d$ and rank $r$ supported on  $\Lambda$-rational points of $\Gamma$. Let $X$ be a Mumford curve whose dual tropical curve is $\Gamma$. By \cite{CartwrightJensenPayne} there is a line bundle $L$ of degree $d$ and rank $r$ on $X$ such that the specialization of $L$ is equal to the divisor class $[D]$. The construction in \cite{CartwrightJensenPayne} is naturally compatible with residue duality and thus the specialization of $\omega_X\otimes L^{-1}$ is equal to the class $[K_\Gamma-D]$ and the rank of $\omega_X\otimes L^{-1}$ is equal to the rank of $K_\Gamma-D$.  Therefore we have
\begin{equation*}
r(D)-r(K_\Gamma-D)=\dim\big\vert L\big\vert -\dim\big\vert \omega_X \otimes L^{-1}\big\vert =d-g+1  
\end{equation*}
and, in this situation, the algebraic Riemann-Roch Theorem implies its tropical counterpart. If both $\Trop_{X_d}\vert L\vert$ and $\Trop_{X_d}\vert L\otimes \omega_X\vert$ contain an interior point of a maximal cell of $\Gamma_d$, then we have $r(D)=\dim\Trop_{X_d}\vert L\vert$ as well as $r(K_\Gamma-D)=\dim\Trop_{X_d}\vert\omega_X\otimes L^{-1}\vert$ and the realizable Riemann-Roch-Theorem from above is equivalent to the tropical Riemann-Roch-Theorem. One may think of $\Trop_{X_d} \vert L\vert$ as the realizability locus in the tropical linear system $\vert D\vert$ and the Baker-Norine rank is equal to the polyhedral dimension of $\Trop_{X_d} \big\vert L\big\vert$.

\subsection*{Faithful tropicalization} The classical approach to the process of tropicalization goes by choosing an embedding into a suitable toric variety and then applying  coordinate-wise valuations to the embedded variety. For symmetric powers, however, Theorem~\ref{thm_skeleton=symmetricpower} suggests that it might be more natural to think of tropicalization as a projection to the non-Archimedean skeleton. The principle of \emph{faithful tropicalization}, as pioneered in \cite{BakerPayneRabinoff} and further developed e.g. in \cite{GublerRabinoffWerner, CuetoHaebichWerner, GublerRabinoffWernerII}, seeks to realign these two perspectives.

Expanding on \cite{GublerRabinoffWerner} we prove in Section~\ref{section_faithfultropicalization} a faithful tropicalization result for skeletons associated to polystable models. As a consequence we obtain the following Theorem~\ref{thm_faithfulsympow}.

\begin{maintheorem}\label{thm_faithfulsympow}
There is an open subset $U\subseteq X_d$ as well as a morphism $f\colon U\rightarrow \G_m^n$ such that the restriction  $\trop\circ f^{an}$ to  $\Gamma_d\simeq\Sigma(X_d)\subseteq U^{an}$  of the induced tropicalization map
\begin{equation*}
\trop_f\colon U^{an}\xrightarrow{f^{an}}\G_m^{n,an}\xrightarrow{\trop}\R^n
\end{equation*}
is a homeomorphism onto its image in $\R^n$. If $\mathcal{X}$ is a strictly semistable model for $X$, then the restriction of $\trop_f$ to each cell of $\Gamma_d$ is unimodular.  
\end{maintheorem}


Unfortunately the construction of the map $f\colon U\rightarrow \G_m^n$ is by no means effective. In Section~\ref{section_faithfultropvialinearseries} we speculate how the recent work of Kawaguchi and Yamaki \cite{KawaguchiYamaki} that uses linear series to find effective faithful tropicalizations of curves may be generalized to find effective faithful tropicalization of symmetric powers. 

Finally, in Section~\ref{section_deJonquieres} we describe a further open question, on the tropical geometry of varieties of de Jonquier\`es divisors in a fixed linear system (expanding on the recent work of Ungureanu \cite{Ungureanu_deJonquieres}). 

\subsection*{Complements and related works}
Symmetric powers of tropical curves have already appeared in \cite{MikhalkinZharkov, HaaseMusikerYu, GathmannKerber}, where they form a key ingredient to understand the polyhedral structure of tropical linear series. While the process of tropicalization on the level of divisor classes, e.g. as a tropicalization map of Picard groups, has been studied widely (e.g. in \cite{Baker_specialization, BakerRabinoff_skelJac=Jacskel}), the purpose of this paper is to provide non-Archimedean foundations for the tropicalization of symmetric powers. 

Our perspective on Theorem~\ref{thm_skeleton=symmetricpower} is that it is another incarnation of the principle that the non-Archimedean skeleton of an algebraic moduli space typically is a tropical moduli space that has recently been implemented in a multitude of different cases, e.g. in  \cite{BakerRabinoff_skelJac=Jacskel} for Picard varieties, in \cite{AbramovichCaporasoPayne} for the moduli space of curves, in \cite{CavalieriMarkwigRanganathan} for Hurwitz space, in \cite{CavalieriHampeMarkwigRanganathan, Ulirsch_tropHassett} for Hassett spaces, in \cite{Ranganathan_skeletonsofstablemaps, RanganathanSantosParkerWiseII} for rational and elliptic stable maps, and in \cite{AbreuPacini} for the universal Picard variety. 

In fact, in \cite[Theorem 3]{MoellerUlirschWerner_realizability}, M\"oller, the second author, and Werner prove that the skeleton of the moduli space $\calDiv_{g,d}$ of effective divisors over $\calM_g$ is equal to the moduli space of effective tropical divisors $\Div_{g,d}^{trop}$  over $M_g^{trop}$. It is tempting to speculate that this result would imply our Theorem~\ref{thm_skeleton=symmetricpower}. Unfortunately our current understanding of the functoriality of skeleton constructions does not seem to allow us to formally deduce such a result. The main obstacle to overcome here lies in the fact that the functor that associates to a $K$-analytic space its underlying topological space does not preserve fibered products.

\subsection*{Acknowledgements} The idea for this article was conceived while both authors were visiting the  Max-Planck-Instute for Mathematics in the Sciences in Leipzig in Summer 2017; they would like to thank their generous host Bernd Sturmfels, who also provided useful comments and feedback on an earlier version of this article. We thank Sam Payne for pointing us to the closely related results in article \cite{Shen_Lefschetz}. M.U. would like to thank Enrica Mazzon for explaining the techniques in her article \cite{BrownMazzon}. Thanks are also due to the anonymous referee for several helpful suggestions and comments.

 This project  has  received  funding  from  the  European Union's Horizon 2020 research and innovation programme  under the Marie-Sk\l odowska-Curie Grant Agreement No. 793039. \includegraphics[height=1.7ex]{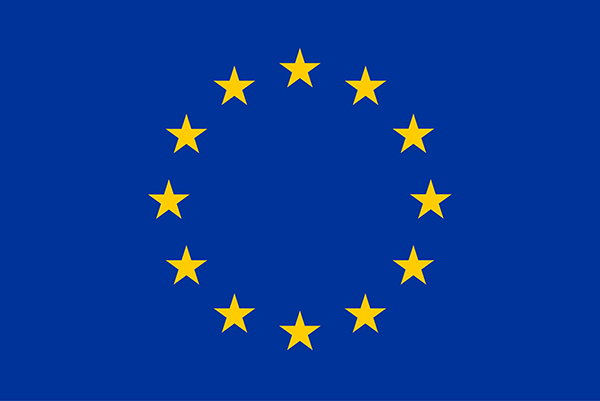} 
  We also acknowledge support from the LOEWE-Schwerpunkt ``Uniformisierte Strukturen in Arithmetik und Geometrie''.
 
 This material is based upon work supported by the National Science Foundation Graduate Research Fellowship Program under Grant No. DGE 1752814. Any opinions,
findings, and conclusions or recommendations expressed in this material are those of the
author(s) and do not necessarily reflect the views of the National Science Foundation.


\section{Symmetric powers of tropical curves}\label{section_sympow}

\subsection{Colored polysimplicial complexes}
In this section we build polyhedral complexes from simplices and products of simplices which have sizes, or \emph{colors}, and only glue along faces that have the same color. 

Let $n\in \N$. We identify the non-negative positive orthant $\mathbb{R}_{\geq 0}^{n+1}$ with the space of monoid homomorphisms $\Hom(\N^{n},\R_{\geq 0})$ into the additive monoid $(\mathbb{R}_{\geq 0},+)$. A \emph{colored $n$-simplex} is a topological space $\Delta$ together with an injective homomorphism $\phi_\Delta\colon\N^{n+1}\hookrightarrow C^0(\Delta,\R_{\geq 0})$ such that the induced map \begin{equation*}\begin{split}\Delta&\longrightarrow \R_{\geq 0}^{n+1}=\Hom(\N^{n+1},\R_{\geq 0})\\
x&\longrightarrow \big(m\mapsto\phi_\Delta(m)(x)\big)
\end{split}\end{equation*}
is a homeomorphism of $\Delta$ onto a subset
\begin{equation}\label{eq_coloredsimplex}
    \Delta(n,a):=\left\{(x_0,\ldots, x_n)\in\R_{\geq 0}^{n+1}\middle\vert \sum_{j=0}^n x_i =a\right\}\subseteq \R_{\geq 0}^{n+1}
\end{equation}
for some $a\in \R_{>0}$. The real number $a$ is uniquely determined by the datum $(\Delta, \phi_\Delta)$ and is called the \emph{color} of $\Delta$. 
More generally, we define the notion of a \emph{colored polysimplex}.

\begin{definition}
Let $\vec{n} = (n_1, \ldots, n_k) \in \mathbb{N}^k$ and $\vec{a} = (a_1, \ldots, a_k) \in \mathbb{R}_{>0}^k$ a vector of colors. Set $\vert \vec{n}\vert:=\sum_{i=1}^k(n_i+1)$. A \emph{colored $\vec{n}$-polysimplex} with \emph{colors} $\vec{a}$ is a topological space $\Delta$ together with an injective homomorphism $\phi_\Delta\colon \N^{\vert\vec{n}\vert}\hookrightarrow C^0(\Delta,\R)$ such that the induced map 
\begin{equation*}\begin{split}\Delta&\longrightarrow \R_{\geq 0}^{\vert\vec{n}\vert}=\Hom(\N^{\vert\vec{n}\vert},\R_{\geq 0})\\
x&\longrightarrow \big(m\mapsto\phi_\Delta(m)(x)\big)
\end{split}\end{equation*}
is a homoemorphism of $\Delta$ onto the subset
\begin{equation}
\label{eqn:deltanr}
    \Delta(\vec{n},\vec{a}):= \left \{ (x_{ij})_{1\leq i \leq k, 0 \leq j \leq n_i} \in \mathbb{R}^{n_1+1+\cdots+n_k+1}_{\geq 0} \middle | \sum_{j=0}^{n_i} x_{ij} = a_i \text{ for all }i \right \}.
\end{equation}
\end{definition}
  
We refer to $\Delta(n,a)$ as the standard colored simplex of type $(n,a)$ and to $\Delta(\vec{n},\vec{a})$ as the \emph{standard colored polysimplex} of type $(\vec{n},\vec{a})$.

\begin{example}
The poly\-simplex $\Delta((1,1),(2,3))$ is the set
$$
\left\{
(x_{10},x_{11},x_{20},x_{21}) \in \mathbb{R}^4_{\geq 0}
\middle |\ 
x_{10}+x_{11} = 2,\ x_{20}+x_{21} = 3
\right\}.
$$
This is a two dimensional rectangle inside of $\mathbb{R}^4$, with sides of length 2 and 3. 
\end{example}

We may think a colored polysimplex as a finite product of colored simplices. In fact, we naturally have 
\begin{equation*}
    \Delta(\vec{n},\vec{a}) =\Delta(n_1,a_1)\times\cdots\times\Delta(n_k,a_k) \ .
\end{equation*}
A \emph{morphism} $f\colon (\Delta,\phi_\Delta)\rightarrow (\Delta',\phi_{\Delta'})$ of colored polysimplices is a continuous map $f\colon\Delta\rightarrow \Delta'$ such that the pullback homomorphism 
\begin{equation*}\begin{split}
    C^0(\Delta',\mathbb{R})&\longrightarrow C^0(\Delta, \mathbb{R})\\
    u&\longrightarrow f^\ast u:=u\circ f
\end{split}\end{equation*}
restricts to a homomorphism of monoids  $\N^{\vert\vec{n}'\vert}\rightarrow \N^{\vert \vec{n}\vert}$. Here and in the rest of the paper we think of $\N^{\vert\vec{n}\vert}$ as an embedded submonoid of $C^0(\Delta,\R)$ and often drop the reference to $\phi_\Delta$ from our notation.

A \emph{face} of a colored polysimplex $(\Delta, \phi)$ is the zero set of a function $\phi(m)\in C^0(\Delta, \R)$ for an $m\in\N^{\vert \vec{n}\vert}$. Each face inherits the structure of a colored polysimplex by restriction of the functions in $\N^{\vert \vec{n}\vert}$. We say that a morphism $\Delta\rightarrow \Delta'$ is a \emph{face morphism}, if it induces an isomorphism of $\Delta$ with a face of $\Delta'$.


\begin{example}
Let $\Delta$ be the standard $((1,1),(2,3))$-poly\-simplex. It has a face $F$ given by
$$
\left\{
(x_{10},x_{11},x_{20},x_{21}) \in \mathbb{R}^4_{\geq 0}
\middle |\ 
x_{10} = 2,\ x_{20}+x_{21} = 3, x_{11} = 0
\right\}.
$$
After projection, this is the standard $((1),(3))$ polysimplex.
\end{example} 



We denote by $\mathbf{cPoly}$ the category of standard colored polysimplices with face morphisms. 
In the following Definition~\ref{polysimplex} we give a new perspective on the notion of a colored polysimplicial complex, originally described by Berkovich in  \cite[Section 3 and 4]{Berkovich_analytic}.

\begin{definition}
\label{polysimplex}
A \emph{(generalized) colored polysimplicial complex} is a functor $\Sigma : \Xi \rightarrow \mathbf{cPoly}$ from a small index category $\Xi$ to $\mathbf{cPoly}$ such that
\begin{enumerate}[(i)]
    \item for all $F \in \Xi$ and all $i:\Delta \rightarrow \Sigma(F)$ there exists an $F' \in \Xi$ and a map $\iota: F' \rightarrow F$ such that $\Sigma(F') = \Delta$ and $\Sigma(\iota) = i$,
    \item for all $F,G,H \in \Xi$ and $\alpha\colon G\rightarrow F$, $\beta\colon H\rightarrow F$ and $c\colon \Sigma(G)\rightarrow \Sigma(H)$ with 
    
   \[
\begin{tikzcd}
{} & \Sigma(F)  \\
\Sigma(G) \arrow{ur}{\Sigma(\alpha)} \arrow{rr}{c} && \Sigma(H) \arrow[swap]{ul}{\Sigma(\beta)}
\end{tikzcd}
\]
commutative, there exists a unique $\gamma : G \rightarrow H$ such that $\Sigma(\gamma) = c$, and
    \item the functor $\Sigma\colon \Xi\rightarrow\mathbf{cPoly}$   is faithful.
\end{enumerate}
If, instead of (iii), we require the stronger condition that
\begin{enumerate}
    \item[(iii)'] all objects $F\in \Xi$ have no automorphisms but the trivial one,
\end{enumerate}
we say that $\Sigma$ is \emph{without self-gluing}.
\end{definition}
Formally, this makes a colored polysimplicial complex a category fibered in groupoids over $\mathbf{cPoly}$. 
We think of this definition in the following way. The category $\Xi$ is an index category specifying how faces should be glued together. The first condition says that whenever $\Sigma(F)$ is part of our colored polysimplicial complex, so are its faces. The second condition says that, given two faces which are a face of a colored polysimplex, with one contained in the other, then the index category $\Xi$ has a unique arrow corresponding to the inclusion. Condition (iii) ensures that the only automorphisms in $\Xi$ are the ones that are in one-to-one correspondence with certain self-gluings of colored polysimplices in $\Sigma$.
Condition (iii)' prohibits the relative interior of a colored polysimplex in $\Sigma$ from being glued to itself and makes $\Sigma$ into a category fibered in sets. 

\begin{example}
\label{ex:torus}
We will construct the torus as a colored polysimplicial complex. Let $\Xi$ be the category with objects $\{a,A,B,S\}$, pictured in Figure \ref{fig:torus_index}.
Now, we must make a functor $\Sigma : \Xi \rightarrow \mathbf{cPoly}$. The object $a$ is sent to the standard zero dimensional polysimplex (vertex), the objects $A$ and $B$ are sent to the standard $((n_1),(a_1)) = ((1),(1))$ polysimplex (interval of length 1), and $S$  is sent to the standard $((1,1),(1,1))$ polysimplex (square with side lengths 1). The arrows are as described in Figure \ref{fig:torus_polysimplicial}.

\begin{figure}[h]
\centering
\begin{minipage}{.5\textwidth}
  \centering
  \includegraphics[height = 2 in]{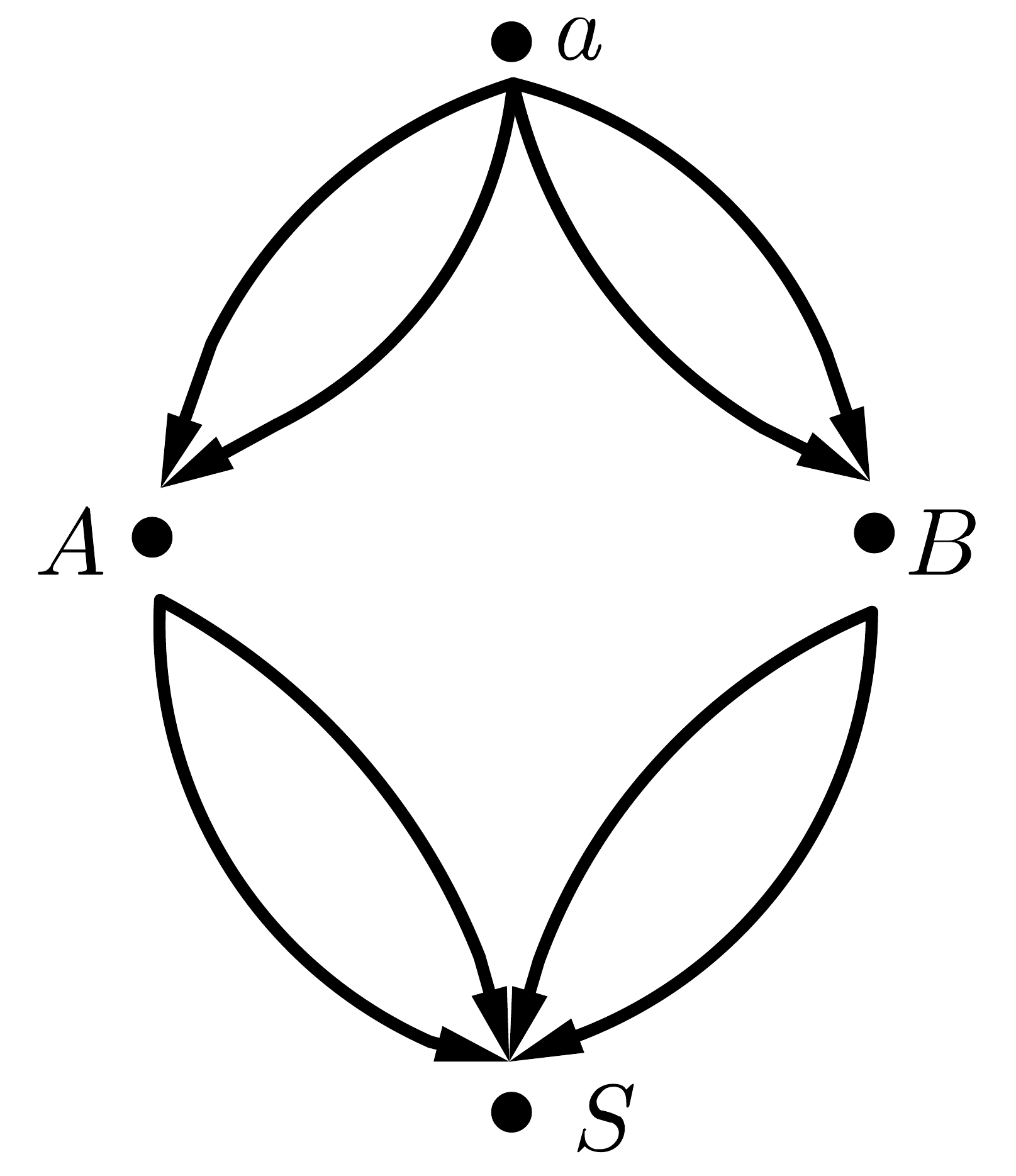}
  \captionof{figure}{The index category for Example \ref{ex:torus}.}
   \label{fig:torus_index}
\end{minipage}%
\begin{minipage}{.5\textwidth}
  \centering
  \includegraphics[height = 2 in]{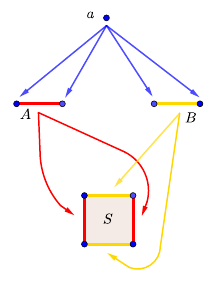}
  \captionof{figure}{The torus as a colored polysimplicial complex.}
  \label{fig:torus_polysimplicial}
\end{minipage}
\end{figure}
\end{example}

\begin{example}
\label{ex:mobius_index}
We will now see how to construct the M\"{o}bius band as a colored polysimplicial complex. Let $\Xi$ be the category with objects $\{a,A,B,T\}$ pictured in Figure \ref{fig:mobius_index}.
Now, we must make a functor $\Sigma : \Xi \rightarrow \mathbf{cPoly}$. The object $a$ is sent to the standard zero dimensional polysimplex (vertex), the objects $A$ and $B$ are sent to the standard $((1),(1))$ polysimplex, and $T$  is sent to the standard $((n_1),(a_1)) = ((2),(1))$ polysimplex (triangle with volume 1). The arrows are as described in Figure \ref{fig:mobius_polysimplicial}.

\begin{figure}
\centering
\begin{subfigure}{.3\textwidth}
  \centering
\includegraphics[height = 2 in]{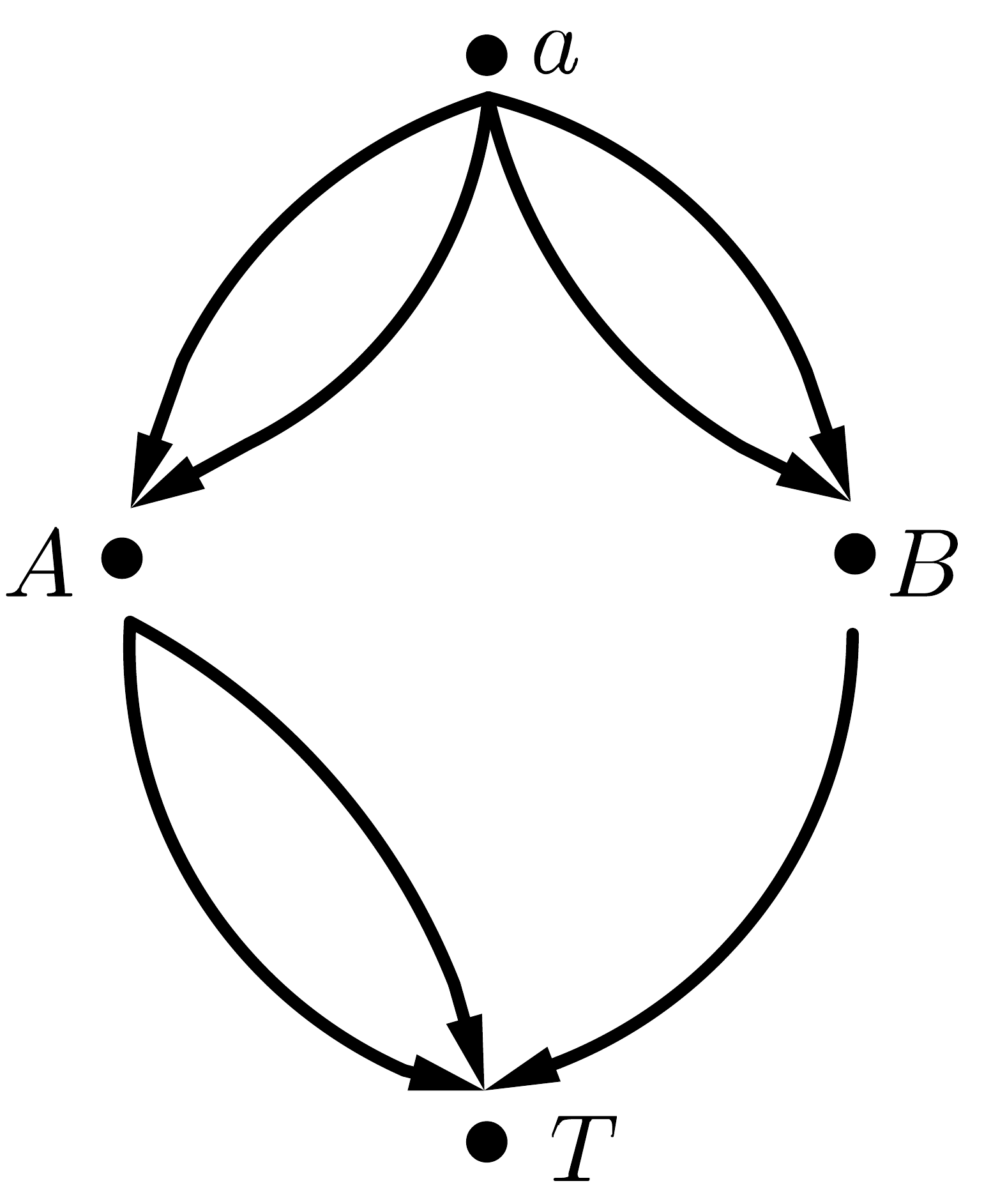}
  \caption{The index category for Example \ref{ex:mobius_index}.}
   \label{fig:mobius_index}
\end{subfigure}%
\hspace{0.2 in}
\begin{subfigure}{.3\textwidth}
  \centering
  \includegraphics[height = 2 in]{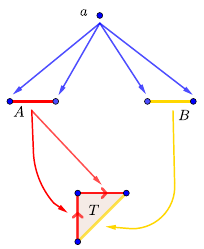}
  \caption{The M\"{o}bius band as a colored polysimplicial complex.}
  \label{fig:mobius_polysimplicial}
\end{subfigure}
\hspace{0.2 in}
\begin{subfigure}{.3\textwidth}
  \centering
\includegraphics[height=2in]{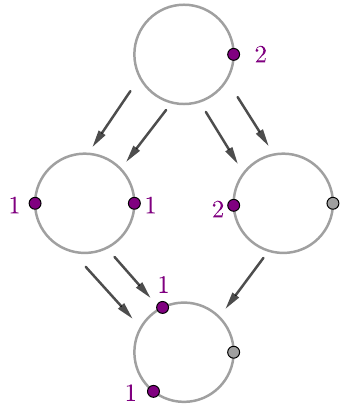}
  \captionof{figure}{The category of stable pairs $\Xi_2(G)^{op}$ in Example \ref{ex:stable_pairs_circle}.}
  \label{fig:stable_pairs_circle}
\end{subfigure}
\caption{The M\"{o}bius band is realized as a colored polysimplicial complex, as this is the symmetric power $(S^1)_2$.}
\label{fig:mobius}
\end{figure}

\end{example}


Given a polysimplicial complex $\Sigma$ indexed by $\Xi$, its geometric realization $|\Sigma|$ is obtained by gluing the disjoint union of the standard polysimplices associated to each polysimplex in $\Sigma(\Xi)$ along the images of the face morphisms. In other words, $|\Sigma|$ is the colimit of the functor which takes each element in $\Xi$ to the associated standard polysimplex, and takes each face morphism $\phi_{\alpha \beta} : F_\alpha \rightarrow F_\beta$ to the unique affine linear embedding taking $\Sigma(F_\alpha)$ to the corresponding face of $\Sigma(F_\beta)$.

\begin{remarks}\hfill
\begin{enumerate}[(i)]
\item Notice that Condition (iii) or (iii)' in Definition \ref{polysimplex} would also both imply the uniqueness in Condition (ii). We prefer to keep our axioms this way to ease the conversion to the language of categories fibered in groupoids. 
\item If in the index category $\Xi$ there is at most one arrow between any two objects, i.e. if it is \emph{thin}, the index category is naturally equivalent to (the category associated to) a poset. In this case no polysimplex in $\Sigma$ is glued to itself. See Example \ref{ex:loopless} below.
\item If we had dropped condition (iii) in Definition \ref{polysimplex} altogether we would get a notion of a \emph{colored polysimplicial stack}, in analogy with \cite{CavalieriChanUlirschWise_tropstack}.
\item In \cite{ChanGalatiusPayne} the authors introduce the notion of a \emph{symmetric $\Delta$-complex}. If in Definition \ref{polysimplex} we had considered a functor $\Sigma\colon \Xi\rightarrow I$ to the category $I$ of finite sets $[p]=\{0,\ldots,p\}$ with inclusions as morphisms (i.e. the category of abstract simplices) with the same conditions (i)-(iii), our definition would be equivalent to that of a symmetric $\Delta$-complex in the sense of \cite{ChanGalatiusPayne}. 

\end{enumerate}
\end{remarks}


\subsection{Divisors on Tropical Curves} 

Let $G = (V,E)$ be a graph. A \emph{length function} is a function $\vert.\vert : E \rightarrow \mathbb{R}_{> 0}$. We say that two tuples $(G,\vert.\vert)$ and $(G',\vert.\vert')$ are equivalent if there exists a common length preserving refinement. An equivalence class of such tuples is called a \emph{metric graph}. We usually identify a metric graph with its \emph{geometric realization}, the  metric space that is given by gluing intervals of length $\vert e\vert$ according to the incidences in $G$. A \emph{tropical curve} is a metric graph $\Gamma$ together with a weight function $h: \Gamma \rightarrow \mathbb{Z}_{\geq 0}$ which has finite support. A \emph{model} for $\Gamma$ is a pair $(G,\vert.\vert)$ which represents $\Gamma$ as a metric graph, and the support of $h$ is contained in the vertex  set of $G$. We define the \emph{genus} of a vertex-weighted graph $(G,h)$ to be $g(G,h):=b_1(G)+\sum_{v\in V(G)}h(v)$. The \emph{genus} of a tropical curve $\Gamma$ is the genus of one of its models. 

A \emph{divisor} on a tropical curve is a finite formal sum $D = \sum a_i p_i$ of points $p_i \in \Gamma$ with integer coefficients $a_i$. The \emph{degree} of a divisor $D = \sum a_i p_i$ is defined to be $\text{deg}(D) = \sum a_i$. A divisor $D$ is \emph{effective} if $D(p):=\sum_{p_i = p} a_i \geq 0$ for all $p \in \Gamma$. Given a tropical curve $\Gamma$, we denote by  $\text{Div}_d^+(\Gamma)$ the set of effective divisors of degree $d$ on $\Gamma$.

Define the $d$-th symmetric product $\Gamma_d$ of $\Gamma$ to be the quotient of the $d$-fold product $\Gamma^d$ of $\Gamma$ by the action of $S_d$ which permutes the factors.

\begin{lemma} We have a set-theoretic equality $\text{Div}_d^+(\Gamma) = \Gamma_d$.
\end{lemma}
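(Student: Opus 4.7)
The plan is to exhibit an explicit bijection between the quotient set $\Gamma^d/S_d$ and the set $\Div_d^+(\Gamma)$ of effective degree-$d$ divisors on $\Gamma$, arising from the addition map on divisors.

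First, I would define the map on the product by
\begin{equation*}
\sigma\colon \Gamma^d \longrightarrow \Div_d^+(\Gamma), \qquad (q_1,\ldots, q_d)\longmapsto q_1+\cdots+q_d,
\end{equation*}
where the sum is interpreted as a formal $\mathbb{Z}$-linear combination, so that coincidences among the $q_i$ contribute to the multiplicity at that point. The image clearly lies in $\Div_d^+(\Gamma)$ since all multiplicities are non-negative integers and they sum to $d$. Because addition of divisors is commutative, $\sigma$ is invariant under the $S_d$-action permuting the factors, and therefore descends to a well-defined map $\bar\sigma\colon \Gamma_d\to \Div_d^+(\Gamma)$.

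Next I would verify bijectivity. For surjectivity, given any $D=\sum a_i p_i\in\Div_d^+(\Gamma)$ with $a_i\in\mathbb{Z}_{\geq 0}$ and $\sum a_i=d$, one lists each $p_i$ with multiplicity $a_i$ to obtain an ordered tuple in $\Gamma^d$ mapping to $D$. For injectivity, suppose two tuples $(q_1,\ldots,q_d)$ and $(q_1',\ldots,q_d')$ define the same divisor; then for every $p\in\Gamma$ the multiplicity $\#\{i:q_i=p\}$ equals $\#\{i:q_i'=p\}$, so the two tuples represent the same multiset of points on $\Gamma$ and hence lie in the same $S_d$-orbit.

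There is essentially no obstacle here beyond carefully unwinding the definitions: the quotient $\Gamma_d=\Gamma^d/S_d$ parametrizes unordered $d$-tuples (with repetition) of points of $\Gamma$, which is literally the same data as an effective divisor of degree $d$. The only subtlety worth mentioning is that the identification is purely set-theoretic at this stage; the polyhedral and colored polysimplicial structure on $\Gamma_d$ promised in Section~\ref{section_sympow} is an additional layer to be put on this underlying set in the subsequent discussion.
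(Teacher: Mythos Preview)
Your proposal is correct and follows essentially the same approach as the paper: both define the map $(q_1,\ldots,q_d)\mapsto q_1+\cdots+q_d$, check it is well-defined on the $S_d$-quotient, and verify bijectivity by the obvious multiset argument. Your version is in fact slightly more careful in that you first define the map on $\Gamma^d$ and then invoke $S_d$-invariance to descend, but the content is identical.
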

\begin{proof}
Let $p \in \Gamma_d$. Then there is a representative $(p_1, \ldots, p_d) \in \Gamma^d$ for $p$. Consider the map $\phi: \Gamma_d \rightarrow \text{Div}_d^+(\Gamma)$, where
$$
(p_1, \ldots, p_d) \mapsto \sum_{i = 1}^d p_i.
$$
Then $\phi$ is well defined, because for any permutation $\sigma \in S_d$, we have $\sum_{i = 1}^d p_i = \sum_{i = 1}^d p_{\sigma(i)}$. The map $\phi$ is surjective, because given any effective divisor $D$ of degree $d$, we may write it in the form $\sum_{i = 1}^d p_i$. Then, we see that $\phi$ is injective because if $p_1 + \cdots + p_d = q_1 + \cdots + q_d$, then there is a permutation $\sigma \in S_d$ such that $p_i = q_{\sigma(i)}$.
\end{proof}

\subsection{$\text{Div}_d^+(\Gamma)$ as a colored polysimplicial complex} Let $(G,h)$ be a weighted graph. A \emph{weighted edge contraction} $\phi'\colon (G',h')\rightarrow (G,h)$ is an edge contraction $G'\rightarrow G$ such that $h'(\phi^{-1}(v))=h(v)$ for all $v\in V(G)$. We say that a weighted edge contraction $\phi\colon(G',h')\rightarrow (G,h)$ is a \emph{chain contraction} if it is given by contracting chains of edges that contain only 
vertices $v$ with $h'(v)=0$. We refer to the inner vertices of these chains as the \emph{exceptional vertices}. A chain contraction $\phi$ defines a map $\phi_* : \text{Div}_d^+(G') \rightarrow \text{Div}_d^+(G)$ by $\sum_{v \in V(G')} n_v v \mapsto \sum_{v \in V(G)} ( \sum_{v' \in \phi^{-1}(v)} n_{v'} )v$.

Let $\Gamma$ be a tropical curve and fix a model $(G,\vert.\vert)$ of $\Gamma$. We show that, associated to this data, there is a natural polysimplical complex $\Delta(G,d)$ 
whose geometric realization is equal to $\Div_d^+(\Gamma)=\Gamma_d$. 

\begin{definition}
A \emph{stable pair} of degree $d$ over $G$ is a tuple $(\phi\colon G'\rightarrow G,D)$ consisting of a  chain contraction $\phi\colon G'\rightarrow G$ together with an effective divisor $D\in\Div_{d}(G')$ such that $D(v)>0$ for all exceptional vertices of $G'$. 
\end{definition}

In our notation we typically suppress the reference to $\phi$ and only write $(G',D)$ instead of $(\phi\colon G'\rightarrow G,D)$. Denote by $\Xi_d(G)$ the category whose objects are stable pairs over $G$ of degree $d$ and whose morphisms  $(G_1',D_1)\rightarrow (G_2',D_2)$ are chain contractions $\phi_{12}\colon G_1'\rightarrow G_2'$ such that $\phi_2\circ\phi_{12}=\phi_1$ and $\phi_{12,\ast} D_1=D_2$. 

\begin{example}
\label{ex:stable_pairs_circle}
Let $\Gamma$ be the circle with circumference 1, and let $G$ be the graph with one edge and one vertex giving a model for $\Gamma$. Then $\Xi_2(G)^{op}$ is as pictured in Figure \ref{fig:stable_pairs_circle}. 

\end{example}

Consider now the functor
\begin{equation*}
    \Sigma_{(G,d)}\colon \Xi_d(G)^{op}\longrightarrow \mathbf{cPoly}
\end{equation*}
that associates to $(G',D)$ a colored polysimplex 
\begin{equation}
\label{eqn:deltagd}
    \Sigma_{(G,d)}(G',D):=\Delta(G',D):=\prod_{e\in E(G)}\Delta\big(n_e,\vert e\vert \big) \ ,
\end{equation}
where $n_e$ denotes the number of exceptional vertices in $G'$ that are contracted to the endpoints of $e$. Notice that $\Delta\big(n_e,\vert e\vert\big)$ is a point whenever $n_e=0$ and so these factors do not contribute to the product in Equation \eqref{eqn:deltagd}.
This way we may think of a point in $\Delta(G',D)$ as a tuple $(l_{e'})_{e'\in E(G')}$ of nonnegative numbers subject to the conditions 
\begin{equation*}
    \sum_{e'}l_{e'}=\vert e\vert
\end{equation*}
for all edges $e$ of $G$, where the sum is taken over all edges $e'$ for which $\phi(e')=e$ or an endpoint of $e$. 
As we will see in Proposition \ref{prop_sympow=polysimcplx}, the polysimplex $\Delta(G',D)$ parameterizes all divisors on $\Gamma$ whose combinatorics is described by the stable pair $(G',D)$.
 
 The functor $\Sigma_{(G,d)}$ sends an arrow $(G_1',D_1)\rightarrow (G_2',D_2)$ corresponding to the chain contraction $\phi_{12}$ in $\Xi_d(G)$ to the face morphism ${\Delta(G'_2,D_2)\hookrightarrow \Delta(G_1',D_1)}$ whose image is given by setting those $x_{e'}=0$ that correspond to edges in $G_1'$  that are contracted by $\phi_{12}$.

\begin{proposition}\label{prop_sympow=polysimcplx}
The functor $\Sigma_{(G,d)}$ defines a colored polysimplicial complex without self-gluing whose geometric realization is in natural bijection with $\Div_d^+(\Gamma)$.
\end{proposition}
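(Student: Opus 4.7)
The plan is to establish the proposition in three steps: (i) verify that $\Delta_{(G,d)}$ extends to a functor from the poset $\Xi(G)$ into $\mathbf{cPoly}^f$; (ii) verify the ``exactly one morphism'' condition of Definition~\ref{polysimplex}; and (iii) describe mutually inverse maps between $\vert\Delta_{(G,d)}\vert$ and $\Div_d^+(\Gamma)$.

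For (i), given $(G_1', D_1) \preceq (G_2', D_2)$ witnessed by a subdivision $\phi_{12}\colon G_2' \to G_1'$, I would build the face morphism $\Delta(G_1', D_1) \hookrightarrow \Delta(G_2', D_2)$ one edge of $G$ at a time. Above each edge $e_i$ of $G$, the vertices of $G_2'$ partition into consecutive \emph{chunks} under $\phi_{12}$, each chunk containing exactly one vertex of $G_1'$; setting to zero those coordinates $x_{ij}$ of $\Delta(k_i^{(2)}, \vert e_i\vert)$ that measure sub-edges internal to such a chunk realizes $\Delta(k_i^{(1)}, \vert e_i\vert)$ as a face. The colors $\vert e_i\vert$ match on both sides, so this is a morphism in $\mathbf{cPoly}^f$, and composition of refinements descends to composition of face embeddings, giving the functor structure.

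For (ii), conversely, any face $F$ of $\Delta(G_2', D_2)$ is cut out by setting some subset of the coordinates $x_{ij}$ to zero, which defines a collapse pattern on the sub-edges of $G_2'$. Collapsing these sub-edges produces a coarser subdivision $G_1'$ of $G$, together with an induced combinatorial map $\phi_{12}\colon G_2' \to G_1'$ and the push-forward divisor $D_1 := \phi_{12,\ast} D_2$. The crux is that $(G_1', D_1)$ remains a stable pair: any exceptional vertex $v$ of $G_1'$ arises as the image of a chunk of vertices of $G_2'$ none of which lies in $V(G)$ (otherwise $v \in V(G)$), so every such vertex is exceptional in $G_2'$ and hence carries positive $D_2$-mass by the stability of $(G_2', D_2)$, giving $D_1(v) > 0$. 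Since the predecessor $(G_1', D_1)$ and the map $\phi_{12}$ are recovered uniquely from the combinatorial data of $F$, the face $F$ is the image of exactly one structural morphism in $\Delta_{(G,d)}$.

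For (iii), given a point $p$ in the interior of the cell associated with $(G', D)$ with barycentric coordinates $x_{ij}$, I would send $p$ to the effective divisor $D^\ast \in \Div_d^+(\Gamma)$ whose support consists of the vertices of $G'$, positioned along each $e_i$ by the partial sums of the $x_{ij}$, with multiplicities read off from $D$. Conversely, any divisor $D^\ast$ determines its unique \emph{minimal} stable pair $(G^\ast, D)$ obtained by subdividing $G$ only at those support points of $D^\ast$ not already in $V(G)$; stability is automatic, since every newly introduced vertex carries positive $D^\ast$-mass by construction. The face identifications built into the colimit $\vert\Delta_{(G,d)}\vert$ reconcile non-minimal representations of the same divisor, so the two assignments are mutually inverse. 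The main technical hurdle throughout is that the stability condition $D(v) > 0$ on exceptional vertices must be preserved under every coarsening; it is this positivity that rules out redundant representations of the same divisor and makes the face structure of $\Delta_{(G,d)}$ align with the natural degeneration structure on $\Div_d^+(\Gamma)$.
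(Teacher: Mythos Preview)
Your proposal is correct and follows essentially the same route as the paper: build face morphisms by setting to zero the coordinates corresponding to collapsed sub-edges, recover the unique predecessor $(G_1',D_1)$ from a face by contracting exactly those sub-edges and pushing forward the divisor, and identify the geometric realization with $\Div_d^+(\Gamma)$ via the unique minimal stable pair supporting a given divisor. You are in fact more careful than the paper in one respect: you explicitly verify that the coarsened pair $(G_1',D_1)$ remains stable, a point the paper's proof passes over in silence but which is needed for the face correspondence to land in $\Xi(G)$.
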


From now on we always implicitly fix a semistable model $(G,\vert.\vert)$ of $\Gamma$ and, in a slight abuse of notation, denote $\Sigma_{(G,d)}$ by $\Gamma_d$. 

\begin{proof}[Proof of Proposition~\ref{prop_sympow=polysimcplx}]
The arrow $(G_1',D_1)\rightarrow (G_2',D_2)$ in $\Xi_d(G)$ naturally induces a face morphism ${\Delta(G'_2,D_2)\hookrightarrow \Delta(G_1',D_1)}$ onto the face of $\Delta(G_1',D_1)$ that is given by setting those $x_{e'}=0$ that correspond to edges in $G_1'$ that are contracted by $\phi_{12}$.

Conversely, given a face  $F$ of $\Delta(G_1',D_1)$, there is a collection of edges $e'$ of $G'_1$ for which $x_{e'}=0$ holds in the image of $F$. Let $\phi_{12}\colon G_1'\rightarrow G_2'$ be the chain contraction that contracts exactly those edges and set $D_2=\phi_{12,\ast} D_1$. Then $\Delta(G_2',D_2) = F$ and the arrow $(G_1',D_1)\rightarrow (G_2',D_2)$ is sent to the face morphism $F \hookrightarrow \Delta(G_1',D_1)$. This verifies Axiom (i) in Definition \ref{polysimplex} and, since $\phi_{12}$ is uniquely determined by the face $F$ also Axiom (iii), i.e. the fact that $\Sigma_{(G,d)}$ is faithful. 

Suppose now we have $(G_1',D_1)$, $(G_2',D_2)$, and $(G_3',D_3) \in \Xi_d(G)$ with $\alpha: (G_1',D_1)\rightarrow (G_2',D_2)$, $\beta : (G_1',D_1)\rightarrow (G_3',D_3)$, and $c \colon \Delta(G_3',D_3) \rightarrow \Delta(G_2',D_2)$ so that $\Sigma_{(G,d)}(\alpha)\circ c=\Sigma_{(G,d)}(\beta)$. This implies that there are edges in $G_1'$ that are not contracted by $\phi_{12}$ but are contracted by $\phi_{13}$. We now define $\gamma$ to be the contraction of these edges. Then $\Sigma_{(G,d)}(\gamma) = c$ and $\gamma$ is uniquely determined by $c$; thus also Axiom (ii) holds.  

Finally, no objects in $\Xi_d(G)$ have automorphisms except the trivial one, since there are no automorphisms that commute with the chain contraction.
So, the functor $\Delta_{(G,d)}$ defines a colored polysimplicial complex without self-gluing.

Let $D$ be an effective divisor of degree $d$ on $\Gamma$. Since $\Gamma$ is semistable, there is a unique model $G'$ of $\Gamma$ that admits a (possibly not unique) chain contraction to $G$ such that for all exceptional vertices $v$ of $G'$ we have $D(v)>0$. Then $(G',D)$ naturally defines a point in the relative interior of $\Delta(G',D)$. Conversely,
for a point in the geometric realization of the polysimplicial complex $\Sigma_{(G,d)}$, there is a unique stable pair $(G',D)$ such that this point lies in the relative interior of $\Delta(G',D)$. So  we may write this point as $(x_{e'})_{e'\in E(G')}$ in the relative interior of the geometric realization of $\Delta(G',D)$ (with $x_{e'}>0$). The geometric realization of $(G')$ with edge lengths $x_{e'}>0$ is equal to $\Gamma$ and $D$ naturally defines an effective divisor of degree $d$ on $\Gamma$. Thus the geometric realization of $\Delta_{(G,d)}$ is in natural bijection with $\Div_d^+(\Gamma)$. 
\end{proof}

\begin{remark}
 Note that in this construction you can recover the degree $d$ only as the dimension of a maximal polysimplex $\Delta(G',D)$ in $\Sigma_{(G,d)}$, where $D(v)=1$ for all exceptional vertices $v$ of $G'$.
 \end{remark}
 
 \begin{example}
 \label{ex:edge}
 Let $\Gamma$ be a tropical curve consisting of one edge $e$ of length 
 $l$ connecting two vertices $v$ and $v'$. Write $G$ for the underlying graph of $\Gamma$. Consider an effective divisor $D$ on $\Gamma$ of degree 2. Then we are in one of the following cases:
 \begin{itemize}
     \item $D=2v$, i.e. the divisor $D$ is supported only at $v$.
     \item $D=2v'$, i.e. the divisor is supported only in $v'$.
     \item $D=v+v'$, i.e. the divisor is supported in both $v$ and $v'$.
     \item $D=v+p$ for a point $p$ in the relative interior of $e$.
     \item $D=v'+p$ for a point $p$ in the relative interior of $e$.
     \item $D=2p$ for a point $p$ in the relative interior of $e$.
     \item $D=p+q$ for two different points $p$ and $q$ in the relative interior of $e$.
 \end{itemize}
The first three cases correspond to three zero-dimensional simplices in $\Sigma_{(G,2)}$, the next three cases correspond to three one-dimensional simplices of length (i.e. color) $l$ in $\Sigma_{(G,2)}$, and the last case corresponds to a $2$-dimensional simplex $\Sigma_{(G,2)}$ of color $l$. See Figure \ref{fig:edge}.
\begin{figure}
    \centering
    \includegraphics[height=1.7 in]{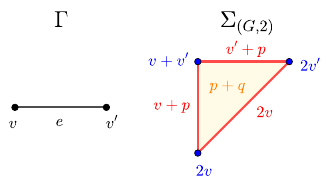}
    \caption{The graph $\Gamma$ and the associated colored polysimplex when $d=2$.}
    \label{fig:edge}
\end{figure}
 \end{example}

\begin{example} 
\label{interval_ex}
Consider the graph $G$ in Figure~\ref{interval_ex_G}.
\begin{figure}
  \centering
\includegraphics[width = 1.5 in]{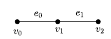}
  \captionof{figure}{The graph $G$ for Example~\ref{interval_ex}}
  \label{interval_ex_G}
\end{figure}
The spaces $G^2$ (the Cartesian product of $G$ with itself) and $G_2$ (the second symmetric product of $G$) are displayed in Figures~\ref{interval_ex_g2} and \ref{interval_ex_sym} respectively. In Figure~\ref{interval_ex_sym}, $G_2$ is displayed with the polysimplicial complex structure described in the proof of Proposition ~\ref{prop_sympow=polysimcplx}.

We note that this is not the quotient of  $\Gamma\times \Gamma$ by the natural $\Z_2$ operation.

\begin{figure}
\centering
\begin{minipage}{.45\textwidth}
  \centering
\includegraphics[height = 1.5 in]{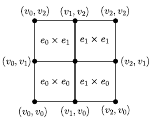}
  \captionof{figure}{The space $G^2$ in Example~\ref{interval_ex}}
  \label{interval_ex_g2}
\end{minipage}
\begin{minipage}{.5\textwidth}
  \centering
\includegraphics[height= 1.5 in]{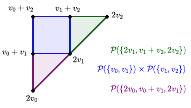}
  \captionof{figure}{The space $G_2$ in Example~\ref{interval_ex}}
  \label{interval_ex_sym}
\end{minipage}
\end{figure}
\end{example}

\begin{example} 
\label{ex:mobius_ex}
Consider the metric graph $S^1$, the unit circle, and let $G$ be the graph with one edge and one vertex giving a model for $S^1$. Then the second symmetric power $(S^1)_2$ is the M\"{o}bius band, see Figure \ref{fig:mobiusstrip}.

\begin{figure}
    \centering
    \includegraphics[height = 2.5in]{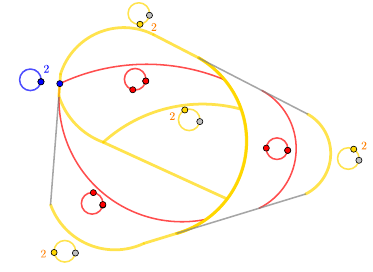}
    \caption{The second symmetric power $(S^1)_2$, as in Example \ref{ex:mobius_ex}.}
    \label{fig:mobiusstrip}
\end{figure}

\end{example}


\begin{example}
\label{ex:loopless}
Let $G$ be the dumbell graph, or the chain of two loops. In Figure~\ref{fig:dumbell} we give the category $\Xi_d(G)$. Since we have chosen a loopless model $G$, the category $\Xi_d(G)$ is thin. The polysimplicial complex $\Delta(G,2)$ has 15 maximal cells, five of which are triangles and 10 of which are squares. It has 25 edges and 10 vertices.
\end{example}

\begin{figure}
    \centering
    \includegraphics[width = \linewidth]{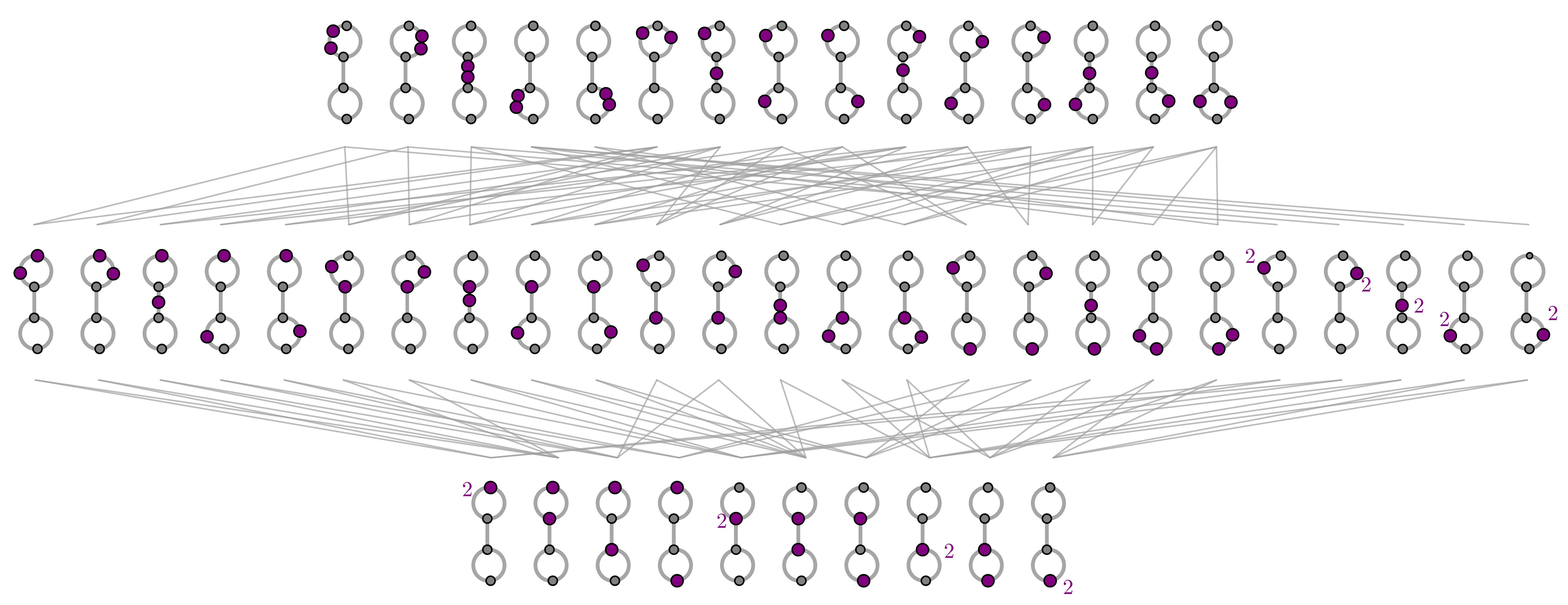}
    \caption{The poset $\Xi_d(G)$ for the dumbell graph $G$ and $d=2$.}
    \label{fig:dumbell}
\end{figure}



\section{Skeletons of polystable models -- a user's guide}\label{section_polystableskeletons}
Let $K$ be a non-Archimedean field, i.e. a field that is complete with respect to a fixed non-trivial non-Archimedean absolute value $\vert.\vert$. Denote by $R$ its valuation ring and by $k$ its residue field. 

\begin{definition}\label{definition_polystable}
Let $X$ be a smooth variety over $K$. A \emph{polystable model of $X$} is a flat and separated scheme $\calX$ over $R$, whose generic fiber is isomorphic to $X$ and such that for every geometric point $x$ of $\calX$ there is an \'etale neighborhood $\calU$ of $x$ in $\calX$ as well as an \'etale morphism $\gamma\colon \calU\rightarrow \Spec A_1\otimes_R\cdots \otimes_R A_r$ over $R$ where $A_i$ is of the form $R[t_1,\ldots, t_n]/(t_1\cdots t_k-a)$ for $a\in R$. If we may choose $\calU$ to be Zariski open for all $x$, we say that $\calX$ is a \emph{strict polystable model}. 
\end{definition}

Suppose now that $X$ is a scheme that is locally of finite type over $K$. In \cite{Berkovich_book} Berkovich has introduced a non-Archimedean analytic space $X^{an}$ associated to $X$. When $X=\Spec A$ is affine, a point $x\in X^{an}$ is a multiplicative seminorm $\vert .\vert_x\colon A\rightarrow \mathbb{R}_{\geq 0}$ that extends the non-Archimedean absolute value on $K$. This space carries the coarsest topology that makes the evaluation maps 
\begin{equation*}
\begin{split}
    \ev_f\colon X^{an}&\longrightarrow \R\\
    x&\longmapsto \vert f\vert_x
\end{split}
\end{equation*}
continuous for all $f\in A$. For a general $X$ we obtain $X^{an}$ by gluing the affine patches. We refer the interested reader to \cite{Berkovich_book} for full details on this construction. 

Suppose now that $X$ is proper over $K$ and that $\calX$ is a proper polystable model of $X$.  In \cite{Berkovich_analytic} Berkovich constructed a strong deformation retraction $\rho_\calX\colon\calX^{an}\rightarrow \Sigma(\calX)$ onto a closed subset of $\calX^{an}$ that naturally carries the structure of a colored polysimplicial complex, the \emph{non-Archimedean skeleton} associated to $\calX$. In this section we shall recall the basic properties of this construction. Our presentation is inspired by \cite[Section 4]{GublerRabinoffWerner} and \cite{Thuillier_toroidal}.

\begin{notation}
Let $\calX$ be a flat and separated model of $X$. Following \cite[Section 4.9]{Gubler_guide}, we denote by $\calX^\circ$ the analytic domain in $X^{an}$ consisting of those points that naturally extends to the model $\calX$. 
If $\calX=\Spec \calA$ is affine, this means we consider only those seminorms on $A=\calA\otimes K$ that are induced by  multiplicative  seminorms $\vert .\vert$ on $\calA$ that are \emph{bounded}, i.e. for which we have $\vert f\vert_x \leq 1$ for all $f\in\calA$. Note that, if $\calX$ is proper over $R$, then the valuative criterion for properness implies  $\calX^\circ=X^{an}$.
\end{notation}

\subsection{Tropicalization of a stable standard model}
Let $n\geq 0$, $k\leq n$, and $a\in R$. We refer to the affine $R$-scheme $\calZ(n,k,a)=\Spec A$ with 
\begin{equation*}
    A=R[t_0,\ldots, t_n]/(t_0\cdots t_k-a)
\end{equation*}
as a \emph{stable standard model}. Consider the standard simplex (as in Equation~\eqref{eq_coloredsimplex})
\begin{equation*}
    \Delta(k,a):=\Delta\big(k,\val(a)\big)=\big\{v\in\R_{\geq 0}^{k+1}\big\vert v_0+\ldots + v_k=\val(a)\big\} \ .
\end{equation*}
Here, we overload the notation by writing $\Delta(k,a)$ for $a \in R$ with $\Delta(k,\val(a))$.
There is a natural continuous tropicalization map
\begin{equation*}
    \trop_{n,k,a}\colon \calZ(k,n,a)^\circ\longrightarrow \Delta(k,a)
\end{equation*}
given by 
\begin{equation*}
x\longmapsto \big(-\log\vert t_0\vert_x, \ldots, -\log\vert t_k\vert_x\big) \ .
\end{equation*}
This map is well-defined, since 
\begin{equation*}
    -\log\vert t_0\vert_x - \ldots -\log\vert t_{k}\vert_x = -\log\vert t_0\cdots t_{k}\vert_x =-\log \vert a\vert_x
    = \val(a) \ .
\end{equation*}

\subsection{The skeleton of a stable standard model} The tropicalization map $\trop_{n,k,a}$ has a natural section $J_{n,k,a}\colon\Delta(k,a)\rightarrow \calZ(n,k,a)^\circ$ given by sending $v\in \Delta(k,a)$ to the multiplicative seminorm given by 
\begin{equation*}
    \sum_{\vec{l}\in\N^n} a_{\vec{l}} 
\big(t_0^{l_0}\cdots t_n^{l_n}\big)\longmapsto \max_{\vec{l}\in\N^n}\big(\vert a_{\vec{l}}\vert \cdot e^{-(l_0v_0+\cdots+l_kv_k)}\big) \ .
\end{equation*}
This is bounded because $a_{\vec{l}} \in R$ and $l_0v_0+\cdots+l_kv_k \geq 0$, by assumption. The section is well-defined, since 
\begin{equation*}
    J_{n,k,a}(v)(t_0\cdots t_n)=e^{-(v_0+\cdots +v_k)}=e^{-\val(a)}=\vert a\vert \ .
\end{equation*}
The composition $\rho_{n,k,a}=J_{n,k,a}\circ\trop_{n,k,a}$ defines a retraction map 
\begin{equation*}
    \rho_{n,k,a}\colon \calZ(n,k,a)^\circ\longrightarrow \calZ(n,k,a)^\circ
\end{equation*} onto a closed subset of $\calZ(n,k,a)^\circ$, the \emph{non-Archimedean skeleton} $\Sigma(n,k,a)$ of $\calZ(n,k,a)^\circ$.

\subsection{Tropicalization of a polystable standard model}\label{section_troppolystablestandardmodel}
Write $\vec{n}=(n_1,\ldots, n_r)$, $\vec{k}=(k_1,\ldots, k_r)$ as well as $\vec{a}=(a_1,\ldots, a_r)$ so that $k_i\leq n_i$. A \emph{polystable standard model} is an affine $R$-scheme of the form 
\begin{equation*}
    \calZ(\vec{n},\vec{k},\vec{a})=\calZ(n_1,k_1,a_1)\times_R\cdots\times_R\calZ(n_r,k_r,a_r) 
\end{equation*}
where each $\calZ(n_i,k_i,a_i)=\Spec A_i$ with $A_i=R[t^{(i)}_0,\ldots, t^{(i)}_{n_i}]/(t^{(i)}_0\cdots t^{(i)}_{k_i}-a_i)$ for $a_i\in R$.  The colored polysimplex associated to $\calZ(\vec{n},\vec{k},\vec{a})$ is defined to be (as in Equation~\eqref{eqn:deltanr})
\begin{equation*}
   \Delta(\vec{k}, \vec{a}):=\Delta\big(\vec{k},\val(\vec{a})\big)= \Big\{v\in \R_{\geq 0}^{k_1+\cdots+k_r+r}\Big\vert v_0^{(i)}+\ldots +v_{k_i}^{(i)}=\val(a_i) \textrm{ for all } i=1,\ldots r\Big\} \ .
\end{equation*}
There is a natural continuous tropicalization map 
\begin{equation*}
\trop_{\vec{n},\vec{k},\vec{a}}\colon \calZ(\vec{n},\vec{k},\vec{a})^{\circ}\longrightarrow \Delta(\vec{k}, \vec{a})
\end{equation*}
given by 
\begin{equation*}
    x\longmapsto \big(-\log\vert t^{(i)}_0\vert_x,\ldots,-\log\vert t^{(i)}_{k_i}\vert_x \big)_{i=1,\ldots, r} \ .
\end{equation*}
It is well-defined, since 
\begin{equation*}
    -\log\vert t^{(i)}_0\vert_x + \ldots +\log\vert t^{(i)}_{k_i}\vert_x = -\log\vert t^{(i)}_0\cdots t^{(i)}_{k_i}\vert_x 
    =-\log \vert a_i\vert_x 
    = \val(a_i) \ .
\end{equation*}

\subsection{The skeleton of a polystable standard model}
\label{section_skeletonpolystablestandardmodel} The tropicalization map has a natural section $J_{\vec{n},\vec{k},\vec{a}}\colon \Delta(\vec{k},\vec{a})\rightarrow \calZ(\vec{n},\vec{k},\vec{a})^\circ$. This is given by associating to $v\in\Delta(\vec{k},\vec{n})$ the bounded seminorm on $A_1\otimes_R\cdots\otimes_R A_r$ given by 
\begin{equation*}
    \sum_l  a_{l}\big(f_1^{(l)}\otimes \cdots\otimes f_r^{(l)}\big)\longmapsto \max_{l}\Big(\vert a_{l}\vert \cdot J_{n_1,k_1,a_1}(v^{(1)})(f_1^{(l)})\cdots J_{n_r,k_d,a_r}(v^{(r)})(f_r^{(l)}) \Big)
\end{equation*}
where $\sum_l  a_{l}\big(f_1^{(l)}\otimes \cdots\otimes f_r^{(l)}\big)$ for $f_i\in A_i$ denotes a general element in $A_1\otimes_R\cdots\otimes_R A_r$. 
The composition $\rho_{\vec{n},\vec{k},\vec{a}}:=J_{\vec{n},\vec{k},\vec{a}}\circ\trop_{\vec{n},\vec{k},\vec{a}}$ defines a retraction map 
\begin{equation*}
    \rho_{\vec{n},\vec{k},\vec{a}}\colon \calZ(\vec{n},\vec{k},\vec{a})^\circ\longrightarrow \calZ(\vec{n},\vec{k},\vec{a})^\circ
\end{equation*}
whose image is a closed subset in $\calZ(\vec{n},\vec{k},\vec{a})^\circ$, the \emph{non-Archimedean skeleton} $\Sigma(\vec{n},\vec{k},\vec{a})$ of $\calZ(\vec{n},\vec{k},\vec{a})^\circ$. 

\subsection{Stratification of a polystable model}

Given a polystable model $\calX$ of $X$, the special fiber $\calX_0$ admits a natural stratification by locally closed subsets, defined inductively as follows: we first write $\calX_0$ as a disjoint union
\begin{equation*}
\calX_0=\bigsqcup_{i=0}^n \calX_0^{(i)} \ .
\end{equation*}
Let $\calX_0^{(0)}$ be the open locus of regular points of $\calX_0$ and let $\calX_0^{(1)}$ be the open locus of regular points in $\calX_0-\calX_0^{(0)}$. In general, given $\calX_0^{(i)}$ for $i=1,\ldots, n$, we define $\calX_0^{(i+1)}$ to be the open locus of regular points in
\begin{equation*}
    \calX_0-\big(\calX_0^{(0)}\cup\cdots\cup\calX_0^{(i)}\big) \ .
\end{equation*}
The subsets $\calX_0^{(i)}$ are locally closed and smooth. We refer to the connected components of $\calX_{0}^{(i)}$ as \emph{the strata} of $\calX_0$.

\subsection{The skeleton of small open neighborhood}\label{section_skeletonsmallneighborhood}
Let $\calX$ be a polystable model of $X$. An \'etale open neighborhood $\delta\colon\calU\rightarrow \calX$ of a geometric point $x$ of a stratum $E$ is itself a polystable model of its generic fiber. The \'etale open neighborhood is said to be \emph{small} if $\calU$ is a strict polystable model of its generic fiber and the closure of all strata in the special fiber of $\calU$ contains $\delta^{-1}E$. 
We refer to a chart $\gamma\colon \calU\rightarrow \calZ(\vec{n},\vec{k},\vec{a})$ as in Definition~\ref{definition_polystable} as \emph{small} with respect to a stratum $E$ if $\delta\colon\calU\rightarrow \calX$ is a small \'etale open neighborhood of a geometric point in $E$ and the image of $\delta^{-1}E$ is contained in the closed stratum of $\calZ(\vec{n},\vec{k},\vec{a})$. 

Let $\calU$ be a small \'etale open neighborhood in $\calX$. In \cite{Berkovich_analytic} it is shown that there is a retraction $\rho_\calU\colon \calU^\circ \rightarrow \calU^\circ$ onto a closed subset $\Sigma(\calU)$ of $\calU^\circ$ such that, whenever $\gamma\colon \calU\rightarrow \calZ(\vec{n},\vec{k},\vec{a})$ is small chart, the diagram
\begin{center}
    \begin{tikzcd}
        \calU^\circ \arrow[rr,"\rho_\calU"]\arrow[d,"\gamma^\circ"] && \calU^\circ\arrow[d,"\gamma^\circ"]\\
        \calZ(\vec{n},\vec{k},\vec{a})^\circ \arrow[rr,"\rho_{\vec{n},\vec{k},\vec{a}}"]&& \calZ(\vec{n},\vec{k},\vec{a})^\circ
    \end{tikzcd}
\end{center}
commutes and the restriction of $\gamma^\circ$ to $\Sigma(\calU)$ induces a homeomorphism $\Sigma(\calU)\xrightarrow{\sim}\Sigma(\vec{n},\vec{k},\vec{a})$. The closed subset $\Sigma(U)$ is called the \emph{skeleton} of $\calU^\circ$. It carries the structure of a colored polysimplex: the injective homomorphism $\phi_\calU\colon \N^{\vert\vec{n}\vert}\hookrightarrow C^0(\Sigma(\calU),\R)$ is given by the restrictions of the pullbacks $\gamma^{-1}t^{(i)}_{j}$ to  $\Sigma(\calU)$ for $j=0,\ldots, k_i$. 

Given another small \'etale neighorhood $\calU'$ of $E$, we consider the fiber product $\calU\times_\calX\calU'$ and find a Zariski open subset $\calV$ that is a small \'etale open neighborhood of $E$. The charts $\gamma$ und $\gamma'$ of $\calU$ and $\calU'$ induce charts of $\calV$ and thus an isomorphism $\Sigma(\calU)\simeq \Sigma(\calU')$ (see \cite[Lemme 3.28 and Prop. 3.29]{Thuillier_toroidal} for an analogous argument in the trivially valued case). We therefore write $\Sigma_E=\Sigma(\calU)\simeq\Sigma(\calU')$ for the \emph{colored polysimplex associated to $E$}. 

The \'etale fundamental group $\pi_1(E,x)$ of the stratum $E$ acts by permuting the $\gamma^{-1}t_{0}^{(i)},\ldots, \gamma^{-1}t_{k_i}^{(i)}$ for each $i=1,\ldots, r$ and thus on $\Sigma(\calU)=\Sigma_E$ by automorphisms of colored polysimplices. As above, the image of this operation does not depend on the choice of $\calU$. We write $S_E$ for the image of this operation in $\Aut(\Sigma(\calU))=\Aut(\Sigma_E)$ and refer to this as the \emph{monodromy} group of $E$. 



\subsection{The polysimplicial complex of a polystable model}

Let $X$ be a smooth variety over $K$ and let $\calX$ be a polystable model of $X$ over the valuation ring $R$. We now associate to $\calX$ a polysimplicial complex $\Sigma(\calX)$. 

The index category $\Xi(\calX)$ is the \emph{category of strata} of $\calX_0$. Its objects are the strata $E$ of $\calX_0$, the endomorphisms of an object are given by the monodromy group $S_E$, and we have an arrow $E\rightarrow E'$ (with $E\neq E'$) for every \emph{\'etale specialization} from $E$ to $E'$. 
We refer the reader to \cite[Appendix A]{CavalieriChanUlirschWise_tropstack} for a precise definition of \'etale specializations. In our situation we can consider a component $\widetilde{E}$ of the preimage of $E$ in a small \'etale neighborhood $\calU$ of $E$. The \'etale specializations $E\rightarrow E'$ are then in one-to-one correspondence with the irreducible components of the preimage of $E'$ in $\calU$ that are in the closure of $\widetilde{E}$.  

We now define a functor $\Sigma\colon \Xi(\calX)\rightarrow \mathbf{cPoly}$ by $E\mapsto \Sigma_E$ on objects. An automorphism of $E$ in $\Xi(\calX)$ induces an automorphism of $\Sigma_E$ and an \'etale specialization $E\rightarrow E'$ induces a face morphism $\Sigma_E\rightarrow \Sigma_{E'}$. The avid reader may now check that the functor $\Sigma$ defines a colored polysimplicial complex by restriction to small \'etale neighborhoods.

\subsection{The skeleton of a polystable model}
Let $X$ be a smooth variety over $K$ and let $\calX$ be a polystable model of $X$ over the valuation ring $R$. In \cite{Berkovich_analytic} Berkovich has shown that the retraction maps $\rho_\calU$ on small \'etale open subsets naturally descend to a retraction map $\rho_\calX\colon \calX^\circ\rightarrow \calX^\circ$ such that the diagram
\begin{center}
    \begin{tikzcd}
        \calU^\circ \arrow[r,"\rho_\calU"]\arrow[d,"\subseteq"] & \calU^\circ\arrow[d,"\subseteq"']\\
        \calX^\circ \arrow[r,"\rho_\calX"] & \calX^\circ 
    \end{tikzcd}
\end{center}
commutes. The image of $\rho_\calX$ in $\calX^\circ$ is the \emph{non-Archimedean skeleton} of $\calX^\circ$. From this construction we immediately obtain that the skeleton is naturally homeomorphic to the geometric realization of the colored polysimplicial complex $\Sigma(\calX)$. In a slight abuse of notation we also use the notation $\Sigma(\calX)$ for the skeleton of $\calX$. 

In fact, the retraction map $\rho_\calX$ is actually a strong deformation retraction onto $\Sigma(\calX)$ using the natural torus operation on a polystable standard model and formally lifting them to $\calX$. Since this aspect of the construction will play no further role in the remainder of this article, we refer the avid reader to \cite{Berkovich_analytic} for   details.

\section{Skeletons of symmetric powers}

\subsection{A polystable model of $X_d$}

Let $X\rightarrow S$ be a scheme over $k$. Recall that a \emph{relative effective Cartier divisor $D$} on $X$ over $S$ is a closed subscheme of $X$ that is flat over $S$ and for which the ideal sheaf $I(D)$ is a line bundle. 
Let $d\geq 0$ and $2g-2+n>0$.

\begin{definition}
Let $\calDivbar_{g,\epsilon^n,d}$ be the category fibered in groupoids over schemes, whose objects are tuples $(\pi\colon X'\rightarrow S,\vec{p}', D)$ consisting of the following data:
\begin{enumerate}[(i)]
\item $\pi\colon X'\rightarrow S$ is a flat and proper morphism of connected nodal curves;
\item $\vec{p}'$ is an ordered collection of $\epsilon$-weighted sections $p_1',\ldots, p_n'\colon S\rightarrow X'$ that do not meet the nodes in each fiber $X'_s$ of $\pi$; and
\item $D$ is a relative effective Cartier divisor of degree $d$ on $X'$ over $S$, whose support does not intersect the nodes in each fiber $X'_s$ of $X'$ over $S$, such that the twisted canonical divisor 
\begin{equation*}
    K_\pi+\epsilon D +\epsilon p'_1 +\ldots + \epsilon p'_n
\end{equation*} is $\pi$-relatively ample for one and therefore all $0<\epsilon\leq\frac{1}{d+n}$.
\end{enumerate}
\end{definition}

We remind the reader of the useful characterization that a $\mathbb{Q}$-divisor $D$ on a nodal curve $X$ is \emph{ample}, if and only if the restriction of $D$ to every irreducible component of $X$ has positive degree. So Condition (iii) above ensures that on each rational component of $X$ there are at least three special points (i.e. nodes, marked points, or points in the support of $D$), two of which have to be nodes. This, in particular, means that the underlying curve is semistable.

Let $\epsilon>0$, such that  $\epsilon\leq\frac{1}{d+n}$ (and $<$, if $d+n=0$). Denote by $\calMbar_{g,\epsilon^{n+d}}$ the moduli space of weighted stable curves of genus $g$ with $n+d$ marked points of weight $\epsilon$ in the sense of \cite{Hassett}. This moduli space parametrizes pairs $(X,\vec{p})$ consisting of a projective nodal curve $X$ and $n+d$ marked, not necessarily distinct points $\vec{p}=(p_1,\ldots, p_{n+d})$ such that the divisor $K_X+\epsilon p_1+\ldots +\epsilon p_{n+d}$ is ample. Note that, this condition, in particular, forces the underlying curve to be semistable. 

There is a natural operation of $S_d$ on $\calMbar_{g,\epsilon^{n+d}}$ that permutes the second $d$ marked points of weight $\epsilon$. Then $\calDivbar_{g,\epsilon^n,d}$ is naturally equivalent to the relative coarse moduli space of 
\begin{equation*}
    \big[\calMbar_{g,\epsilon^n,\epsilon^d}/S_d\big]
\end{equation*}
over $\calMbar_{g,\epsilon^n}$ in the sense of \cite[Theorem 3.1]{AbramovichOlssonVistoli}. So, in particular, it is a smooth and proper Deligne-Mumford stack with a projective coarse moduli space. There is a natural forgetful-stabilization morphism $\calDivbar_{g,\epsilon^n,d}\rightarrow \calMbar_{g,\epsilon^n}$ and we write $\calDiv_{g,\epsilon^n,d}$ for its restriction to $\calM_{g,\epsilon^n}$. 

Let $\big[X,\vec{p}\big]$ be a point in $\calMbar_{g,\epsilon^{n+d}}$. Then, since $\calMbar_{g,\epsilon^{n+d}}$ has a normal crossing boundary by \cite[Theorem 1.1]{Ulirsch_tropHassett}, the formal completion of the local ring of $\calMbar_{g,\epsilon^{n+d}}$ at the point $\big[X,\vec{p}\big]$ may be written as the ring of power series $k\llbracket t_1,\ldots, t_{3g-3+n+d}\rrbracket$, where the last $n+d$ coordinates parametrize formal deformations of the marked points. Then, at the image $\big[X,p_1,\ldots, p_n, p_{n+1}+\ldots+p_{n+d}\big]$ of $\big[X,\vec{p}\big]$ in $\calDivbar_{g,\epsilon^n,d}$, the formal completion of the local ring is given by $k\llbracket t_1,\ldots, t_{3g-3+n+d}\rrbracket^{S_d}$, where $S_d$ operates on the last $d$ coordinates. Using elementary symmetric polynomials we see that this is isomorphic to $k\llbracket t_1,\ldots, t_{3g-3+n}, T_{1},\ldots, T_d\rrbracket$. Thus the complement of $\calDiv_{g,\epsilon^n,d}$ in $\calDivbar_{g,\epsilon^n,d}$ has (stack-theoretically) normal crossings. 

\begin{remark}
For $n=0$, the moduli space $\calDivbar_{g,d}$ was constructed in \cite[Section 2]{MoellerUlirschWerner_realizability}. It is also equal to a special case of the moduli space of stable quotients, as defined in \cite[Section~4]{MarianOpreaPandharipande}. 
\end{remark}

\begin{proposition}\label{prop_fiber}
Let $(X,\vec{p})=(X,p_1,\ldots, p_n)$ be a stable $\epsilon$-weighted marked curve of genus $g$ given by a morphism $\Spec(k) \rightarrow \calMbar_{g,\epsilon^n}$. The fiber $\calDivbar_{g,\epsilon^n,d} \times_{\calMbar_{g,\epsilon^n}} \Spec(k)$ over this point is represented  by a fine moduli scheme $\overline{\Div}^+_d\big(X,\vec{p}\big)$
which parametrizes tuples $(X',\vec{p}',D)$ consisting of the following data:
\begin{enumerate}[(i)]
    \item a nodal curve $X'$;
    \item a collection of $\epsilon$-weighted marked points $\vec{p}'=(p_1',\ldots,p_n')$ of $X'$ such that $p_1',\ldots,p_n'$ do not meet the nodes of $X'$ together with an isomorphism from the stabilization of $(X',\vec{p}')$ to $(X,\vec{p})$;
    \item a relative effective Cartier divisor $D$ of degree $d$ on $X'$ whose support does not intersect the nodes or marked points of $X'$ such that the twisted canonical divisor
$$
K + \epsilon D + \epsilon p_1' + \cdots + \epsilon p_n'
$$
is ample, where $\epsilon = \frac{1}{d+n} > 0$.
\end{enumerate}
\end{proposition}

\begin{proof}
This has a fine moduli scheme, since the forgetful and stabilization map $\calDivbar_{g,n,d}\rightarrow \calMbar_{g,n}$ is representable. The interpretation of the fiber product in Parts (i) - (iii) is an immediate consequence of the definition of the $2$-fiber product.
\end{proof}

 If $X$ is smooth and does not have marked points, the space  $\overline{\Div}_{d}^+(X)$ gives effective divisors on $X$ and is the $d$-th symmetric power $X_d$ (see \cite[Theorem 3.13]{Milne_Jacobian}).
The natural forgetful morphism $\calDivbar_{g,\epsilon^n,d}\rightarrow\calMbar_{g,\epsilon^n}$ associates to $(X'\rightarrow S,\vec{p},D)$ the stabilization of $(X'\rightarrow S,\vec{p})$.

\begin{definition}
Let $X$ be a smooth projective curve of genus $g$ over $K$ and suppose that $\calX$ is a semistable model of $X$ over $\Spec R$ that admits a section. Let $n$ be the number of rational irreducible components in the special fiber of $\calX$ having 2 or fewer nodes. Choose $n$ marked sections $s_1,\ldots, s_n$ of weight $\epsilon$ of $\calX$ that do not meet the singularities in the special fiber, such that $(\calX,s_1,\ldots, s_n)$ is stable of type $(g,\epsilon^n)$.  This datum is specified by a morphism 
$\Spec R\rightarrow \calMbar_{g,\epsilon^n}$.
Define $\overline{\Div}_d^+(\calX)$ to be
\begin{equation*}
    \overline{\Div}_d^+(\calX,\vec{p})=\Spec R\times_{\calMbar_{g,\epsilon^n}}\calDivbar_{g,\epsilon^n,d} \ .
\end{equation*}
\end{definition}

\begin{proposition}
The scheme $\overline{\Div}_d^+(\calX,\vec{p})$ is a proper polystable model of $X_d$.
\end{proposition}

\begin{proof}

Since $\overline{\Div}_d^+(\calX)$ is the fibered product
$\Spec R\times_{\calMbar_{g,\epsilon^n}}\calDivbar_{g,\epsilon^n,d} 
$
it is is flat and proper over $R$.
The generic fiber of $\overline{\Div}_d^+(\calX)$ consists of effective degree $d$ divisors on the curve $X$. Since the divisor can possibly be supported on the points $\vec{p}$, this is nothing but the $d$-th symmetric power $X_d$ of $X$. So the generic fiber of $\overline{\Div}_d^+(\calX)$ is isomorphic $X_d$.

Points in the special fiber of $\overline{\Div}_d^+(\calX)$ are given by pairs $((\calX',\vec{p}'),D)$ where the special fiber $\calX_0'$ is semistable, $(\calX',\vec{p}')$ is an $\epsilon$-weighted $n$-marked curve, together with an isomorphism between the stabilization of  $(\calX_0',\vec{p}')$ and $(\calX_0,\vec{p})$, and $D$ is an effective divisor on $\calX'$ of degree $d$ whose restriction to the special fiber is supported in the non-singular locus of $\calX_0'$ and which has positive degree on every exceptional component of $(\calX_0',\vec{p}')$ (i.e. exceptional components of $\calX_0'$ that that do not contain a marked point). 

Consider the nodes of $\calX$ given by \'etale local equations $x_iy_i=a_i$ for $a_i\in R$ (for $i=1,\ldots, r$). Write the nodes in $\calX'$ above $x_iy_i=a_i$ as $x_j^{(i)}y_j^{(i)}=t_j^{(i)}$ for $j=1,\ldots, k_i$ and coordinates $t_j^{(i)}$ on $\overline{\Div}_d^+(\calX)$. In this case we have $a_i=t_1^{(i)}\cdots t_{k_i}^{(i)}$ on $\overline{\Div}_d^+(\calX)$. These coordinates can be chosen on a \'etale open neighborhood $\calU$ and we may add further coordinates  $t_{k_i+1}^{(i)}\ldots, t_{n_i}^{(i)}$ (for $i=1,\ldots, r$) so that, possibly after shrinking $\calU$, they define a small \'etale chart $\gamma\colon \calU\rightarrow \calZ(\vec{n},\vec{k},\vec{a})$. 
\end{proof}

We explicitly point out that the scheme $\overline{\Div}_d^+(\calX)$ is not the quotient of ${\calX\times_R\cdots \times_R\calX}$ by the operation of $S_d$. While not being smooth over $R$, the $R$-scheme $\overline{\Div}_d^+(\calX)$ only admits at most toroidal singularities over $R$, since, heuristically, we are allowed to perform a weighted blow-up in the special fiber of $\calX$ whenever the support of $D$ is at risk of meeting the singularities in $\calX_0$. 

\subsection{Stratification by dual graphs}
Consider a point in the special fiber of $\overline{\Div}_d^+(\calX)$. It is given by a pair $(X_0',D)$ where $X_0'$ is a nodal curve together with a morphism $X_0'\rightarrow X_0$ given by contracting rational components and $D$ is an effective divisor on $X'_0$ whose support is contained in the non-singular locus of $X_0'$ and which has positive degree on every exceptional component of $X_0'$. 

We associate to $(X_0',D)$ a dual stable pair $\big(G',\mdeg(D)\big)$ over $G$ as follows: The graph $G'$ is the weighted dual graph of $X_0'$. Its vertices $v$ correspond to the components $X_v'$ of $X_0'$ and it contains an edge $e$ emanating from two vertices $v$ and $v'$ for every node connecting the two components $X_v'$ and $X_{v'}'$. It is endowed with a natural vertex weight $h\colon V(G')\rightarrow \Z_{\geq 0}$ given by $h(v)=g(X_v)$, the genus of the component $X_v$. Finally the degree of the restriction of $D$ to every component $X_v$ defines a divisor
\begin{equation*}
    \mdeg(D)=\sum_{v\in V(G')} \deg \big(D\vert_{X_v}\big)\cdot v
\end{equation*}
on $G'$ supported on the vertices of $G'$, the \emph{multidegree of $D$}. 

The graph $G'$ is naturally a subdivision of $G$, the dual graph of $\calX_0$. The condition that $K_{X_0'}+D$ has non-empty intersection with every exceptional component of $X_0'$ over $X_0$ is equivalent to the condition that $D(v)>0$ for every exceptional vertex $v$ of $G'$ over $G$, i.e. to the condition that $(G',\mdeg(D))$ is a stable pair over $G$.

\begin{proposition}\label{prop_skel=sympow}
The colored polysimplicial complex $\Gamma_d$ is naturally isomorphic to $\Sigma\big(\overline{\Div}_d^+(\calX)\big)$.
\end{proposition}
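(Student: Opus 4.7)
The plan is to exhibit an order-isomorphism $\Psi\colon \Xi(\calX_d) \xrightarrow{\sim} \Xi(G)$ between the indexing posets and to check that the assignments of colored polysimplices agree under $\Psi$, thereby giving an isomorphism of colored polysimplicial complexes in the sense of Definition~\ref{polysimplex}.

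To define $\Psi$ I would send a stratum $E$ of $\calX_{d,0}$ to the dual stable pair $\big(G_{X_0'}, \mdeg(D)\big)$ over $G$ attached to any geometric point $(X_0', D)$ of $E$, as described in the paragraph preceding the statement. This is well-defined on $E$ because in a local chart $\gamma\colon \calU \to \calZ(\vec{n}, \vec{k}, \vec{a})$ from the proof of the preceding proposition, the variables $t_j^{(i)}$ are canonically indexed by the nodes of $X_0'$ above the $i$-th node of $\calX_0$, and the multidegree is constant along the stratum. Injectivity of $\Psi$ is immediate, since the stratification is by combinatorial type. For surjectivity I would use the modular description: for $g \geq 2$ the identification $\calX_d = \Spec R \times_{\calMbar_g} \calDivbar_{g,d}$ combined with the connectedness of combinatorial strata in $\calDivbar_{g,d}$ from \cite{MoellerUlirschWerner_realizability} shows that every stable pair over $G$ is realized by exactly one stratum (the case $g = 1$ reduces to this as in the proof of the preceding proposition). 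Finally, $\Psi$ is order-preserving because the relation $E \prec E'$ (i.e.\ $E' \subseteq \overline{E}$) corresponds on the semistable side to inserting exceptional components and/or smoothing, which on dual graphs is exactly the subdivision-with-push-forward relation $(G_1', D_1) \preceq (G_2', D_2)$ in $\Xi(G)$.

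For the identification of colored polysimplices, fix a stratum $E$ corresponding to $(G', D)$ under $\Psi$. The $i$-th factor of the local chart around $E$ is
\begin{equation*}
    \Spec R[t_1^{(i)}, \ldots, t_{n_i}^{(i)}]/\big(t_1^{(i)} \cdots t_{k_i}^{(i)} - a_i\big),
\end{equation*}
where the $k_i$ variables entering the monomial relation are indexed by the nodes of $X_0'$ above the $i$-th node of $\calX_0$, equivalently by the edges of $G'$ above the edge $e_i$ of $G$, and $\val(a_i) = |e_i|$. The resulting polysimplex $\Delta(\vec{k}, \vec{a})$ then parametrizes collections of edge lengths in $G'$ above each edge $e_i$ of $G$, subject to the constraint that they sum to $|e_i|$, which is exactly the polysimplex $\Delta(G', D)$ from Equation~\ref{eqn:deltagd}. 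Face morphisms match because degenerating from $E$ to $E'$ is locally modeled by setting certain $t_j^{(i)}$ equal to zero, which on the polysimplex side is the face morphism that sets the corresponding edge lengths to zero and contracts those edges of $G'$---exactly the face morphism used in the proof of Proposition~\ref{prop_sympow=polysimcplx}. The main obstacle I anticipate is the careful verification that the stratification of $\calX_{d,0}$ is indexed exactly by stable pairs with connected combinatorial loci, which I would reduce to the analogous statement for $\calDivbar_{g,d}$ via the fibered product description; once this is in place, the remaining identifications of polysimplices and face morphisms are essentially a matter of matching local coordinates with edges of $G'$.
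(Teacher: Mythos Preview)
Your proposal is correct and follows essentially the same route as the paper: set up an order-isomorphism between the stratification poset $\Xi(\calX_d)$ and the poset $\Xi(G)$ of stable pairs via the dual-pair construction, then match the colored polysimplices and face morphisms using the local coordinates $t_j^{(i)}$ from the polystable chart. The only minor difference is in the surjectivity step: the paper argues non-emptiness of each stratum $E_{(G',D)}$ directly (and identifies the stratification depth with the number of exceptional components), whereas you route it through the fibered-product description and connectedness of combinatorial strata in $\calDivbar_{g,d}$; both arguments work, and the paper's is slightly more self-contained.
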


\begin{proof}
The strata of the special fiber of $\overline{\Div}_d^+(\calX)$ are precisely the locally closed subsets on which the dual graphs are constant. In fact, the smooth locus $\calX_{d,0}^{(0)}$ of $\calX_{d,0}$ is the exactly locus of stable pairs $(X_0',D)$ for which $X_0'$ is isomorphic to $X_0$, which translates into the dual graph $G'$ of $X_0'$ being isomorphic to the dual graph $G$ of $X_0$. The different strata in $\calX_{d,0}^{(0)}$ are distinguished by the multidegree of $D$. 

Similarly, for $i=0,\ldots, d-1$ the regular locus of $\calX_{d,0}^{(i+1)}$ of  \begin{equation*}
    \calX_{d,0}-\big(\calX_{d,0}^{(0)}\cup \cdots\cup\calX_{d,0}^{(i)}\big)
\end{equation*} 
corresponds exactly to the locus of stable pairs $(X_0',D)$ that contain $i+1$ exceptional components. This translates into the condition that the dual graph contains exactly $i+1$ exceptional vertices over $G'$. The different strata, again, are distinguished by the multidegree of $D$. 

Moreover, notice that for every stable pair $(G',D)$ the locus $E_{(G',D)}$ of points in $\calX_{d,0}$ whose dual pair is $(G',D)$ is non-empty. The \'etale specializations $E_{(G',D')}\rightarrow E_{(G'',D)}$ between strata are in a natural one-to-one correspondence with chain contractions $\pi\colon G''\rightarrow G'$ over $G$ for which $\pi_\ast D'=D$. The automorphism group of a stratum is always trivial by part (ii) of Proposition \ref{prop_fiber}. So there is an order-preserving equivalence between the category  $\Xi(\overline{\Div}_d^+(\calX))$ of strata of $\calX_{d,0}$ and the category $\Xi_d(G)$ of stable pairs $(G',D)$ over $G$.

Finally, consider the nodes of $\calX$ given by by local equations $x_iy_i=a_i$ for $a_i\in R$ (for $i=1,\ldots, r$). Write the nodes in $\calX'$ above $x_iy_i=a_i$ as $x_j^{(i)}y_j^{(i)}=t_j^{(i)}$ for $j=1,\ldots, k_i$ and coordinates $t_j^{(i)}$ on $\overline{\Div}_d^+(\calX)$ and recall that in this case we have $a_i=t_1^{(i)}\cdots t_{k_i}^{(i)}$ on $\overline{\Div}_d^+(\calX)$. From this description we see that the colored polysimplex  $\Sigma_{(E_{(G',D)})}$ of the stratum $E_{(G',D)}$ is equal to $\Delta(\vec{k},\vec{a})$.

On the other hand, the $k_i$ are precisely the number of exceptional vertices over an edge $e_i$ of $G$ and the edge length $\vert e_i\vert$ of $e_i$ is equal to $\val(a_i)$. So the colored polysimplex $\Delta(G,D')=\Delta\big(\vec{k},\val(\vec{a})\big)$ of a stable pair $(G',D)$ is equal to $\Delta(\vec{k}, \vec{a})=\Sigma_{(E_{(G',D)})}$. This identification naturally commutes with the face morphisms induced by \'etale specializations $E_{(G',D)}\rightarrow E_{(G'',D)}$ and $(G'',D')\rightarrow (G',D)$ respectively and so we have found a canonical isomorphism between $\Gamma_d$ and the skeleton $\Sigma(\overline{\Div}_d^+(\calX))$ of $\overline{\Div}_d^+(\calX)$. 
\end{proof}

\subsection{The process of tropicalization} Let $X$ be a smooth projective curve over $K$ and suppose there is a fixed semistable model $\calX$ of $X$ over $R$ that admits as section. By the semistable reduction theorem, we can always find such a $\calX$ if we are willing to replace $X$ by its base change to a finite extension of $K$. 

Denote by $\Gamma$ the dual tropical curve of $\calX$. We now define the tropicalization map 
\begin{equation*}
    \trop_{X_d}\colon X_d^{an}\longrightarrow \Gamma_d \ .
\end{equation*}
A point $x$ in $X_d^{an}$ can be represented by a morphism $\Spec L\rightarrow X_d$ for a non-Archimedean extension $L$ of $K$. This, in turn, corresponds to an effective Cartier divisor $D$ of degree $d$ on $X_L$, via the interpretation of $X_d$ as a moduli space of effective divisors (see \cite[Theorem 3.13]{Milne_Jacobian}). Since $\overline{\Div}_d^+(\calX)$ is proper over $\Spec R$, the valuative criterion provides us with a unique semistable model $\calX'$ together with an isomorphism between its stabilization and $\calX$ and a relative effective Cartier divisor $\calD$ in $\calX'$ such that
\begin{itemize}
\item the generic fiber of $\calD$ is equal to $D$, 
\item the support $\supp(\calD_0)$ in the special fiber does not meet the nodes of $\calX'_0$, and
\item $\supp(\calD_0)\cap E\neq\emptyset$ for every exceptional component $E$ of $\calX'_0$ over $\calX_0$. 
\end{itemize}
We may now define $\trop_{X_d}(x)$ to be the divisor that arises as the multidegree of $\calD_0$ on $\Gamma$. It is naturally supported on the model $G'$ of $\Gamma$ given by the dual graph of $\calX_0'$. 

A posteriori, Theorem~\ref{thm_skeleton=symmetricpower} implies that the construction of $\trop_{X_d}$ does not depend on any of the above choices and that the tropicalization map is invariant under base change. In other words, given a non-Archimedean extension $K'$ of $K$ the dual tropical curve $\Gamma_{X_{K'}}$ is naturally isometric to $\Gamma_X$ and the natural diagram 
\begin{equation*}
    \begin{tikzcd}
        X_{K'}^{an}\arrow[rr,"\trop_{X_{K',d}}"]\arrow[d]&& \big(\Gamma_{X_{K'}}\big)_d\arrow[d,"\simeq"]\\
        X^{an}\arrow[rr,"\trop_{X_d}"']&& \Gamma_d
    \end{tikzcd}
\end{equation*}
commutes. 

\subsection{The specialization map}\label{section_specialization}
Denote by $\Div_{K,d}(X)$ the group of \emph{$K$-split} divisors on $X$, which may be written $\sum_{i=1}^n a_i p_i$ for points $p_i\in X(K)$ and $\sum_i a_i = d$. Write $\rho_X\colon X^{an}\rightarrow \Gamma$ for the retraction of $X^{an}$ to $\Gamma_{X}\simeq\Gamma$, which can be thought of as the skeleton of $X^{an}$. In \cite{Baker_specialization}, Baker constructs a   \emph{specialization homomorphism}
\begin{equation*}
    \rho_{X,\ast}\colon \Div_{K,d}(X)\longrightarrow \Div_d(\Gamma) \ .
\end{equation*}
It is defined by sending a $K$-split divisor $D=\sum_{i=1}^n a_i p_i$ on $X$ to the divisor 
\begin{equation*}
    \rho_{X,\ast}(D)=\sum_{i=1}^n a_i\rho_X(p_i)
\end{equation*}
on $\Gamma$. 

Recall that, if $Y$ is a scheme locally of finite type over $K$, then there is a natural injective map  $i\colon Y(K)\hookrightarrow Y^{an}$, whose image is dense if $K$ is algebraically closed. On an affine patch $U=\Spec A$ it is given by associating to a $K$-rational point the multiplicative seminorm
\begin{equation*}
    A\longrightarrow K\xlongrightarrow{\vert.\vert_K} \R_{\geq 0} 
\end{equation*}
on $A$. So, in particular, there is a natural injective map 
\begin{equation*}
    i\colon \Div_{K,d}^+(X)\hooklongrightarrow X_d^{an}
\end{equation*}
whose image is dense in $X_{d}^{an}$ if $K$ is algebraically closed.

\begin{proposition}\label{prop_spec=trop}
Given a $K$-split effective divisor $D$ on $X$ of degree $d$, we have 
\begin{equation*}
\trop_{X_d}\big(i(D)\big)=\rho_{X,\ast}(D) \ .
\end{equation*}
 
\end{proposition}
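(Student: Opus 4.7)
The plan is to exhibit the unique $R$-point of $\calX_d$ that extends $i(D) \in X_d^{an}$ and then compute its multidegree directly. Writing $D = \sum_{i=1}^n a_i p_i$ with distinct $p_i \in X(K)$ and $a_i \in \Z_{>0}$, I first lift each $p_i$ to a section $\tilde p_i\colon \Spec R \to \calX$ by the valuative criterion applied to the proper morphism $\calX \to \Spec R$. Let $q_i := \tilde p_i(0) \in \calX_0$ denote the specialization.

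Next, I would construct a semistable refinement $\pi\colon \calX' \to \calX$ such that (a) for every $i$, the lifted section $\tilde p_i^{\calX'}\colon \Spec R \to \calX'$ meets the smooth locus of $\calX'_0$, and (b) every exceptional component of $\calX'_0$ over $\calX_0$ contains $\tilde p_i^{\calX'}(0)$ for at least one $i$. Concretely, for each node $\nu$ of $\calX_0$ with local equation $xy = a_\nu$ through which at least one section $\tilde p_i$ passes, I would perform the standard sequence of blow-ups that inserts a chain of $\PP^1$'s above $\nu$ positioned at the values $\val(x_i) \in (0, \val(a_\nu))$ obtained by pulling the local coordinate $x$ back along $\tilde p_i$. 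This chain is precisely the minimal subdivision of the edge $e_\nu$ needed to accommodate all the points $\rho_X(p_i)$ that lie in the interior of $e_\nu$, and by the minimality of the blow-up each new exceptional vertex supports the specialization of some $\tilde p_i^{\calX'}$.

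With $\calX'$ in hand, define $\calD := \sum_i a_i\, \overline{\tilde p_i^{\calX'}}$. Because each $\tilde p_i^{\calX'}$ lies in the smooth locus of $\calX' \to \Spec R$, its Zariski closure is an effective Cartier divisor flat over $R$, and hence so is $\calD$. By construction, $\supp(\calD_0)$ avoids the nodes of $\calX'_0$ and meets every exceptional component of $\calX'_0$ over $\calX_0$, so $(\calX', \calD)$ defines a valid $R$-point of $\calX_d$. Since its generic fiber recovers $D$, the properness of $\calX_d$ over $\Spec R$ guarantees that this is the unique extension of $i(D)$ used in the definition of $\trop_{X_d}(i(D))$.

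Finally, a direct computation of the multidegree yields $\mdeg(\calD_0) = \sum_i a_i \cdot v_i$, where $v_i \in V(G')$ is the vertex whose component contains $\tilde p_i^{\calX'}(0)$. Under the natural embedding $G' \hookrightarrow \Gamma$ as a subdivision of the dual graph $G$, the vertex $v_i$ coincides with the Baker specialization $\rho_X(p_i)$, since this is exactly how $\rho_X$ is defined once semistable reduction has separated $p_i$ from the nodes. Hence $\trop_{X_d}(i(D)) = \mdeg(\calD_0) = \sum_i a_i\, \rho_X(p_i) = \rho_{X,\ast}(D)$, as required. The only nontrivial obstacle is the combinatorial verification that the minimal blow-up constructed above is the same refinement that underlies Baker's definition of $\rho_X$; this amounts to matching the valuations of the local coordinates of the sections $\tilde p_i$ at each node with the edge-length coordinates on $\Gamma$, which is standard.
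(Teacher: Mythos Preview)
Your approach is correct and essentially the same as the paper's: both produce the unique $R$-point $(\calX',\calD)$ of $\calX_d$ extending $D$ and then identify the vertex $v_i$ carrying $\tilde p_i^{\calX'}(0)$ with $\rho_X(p_i)$. The only difference is one of emphasis---you construct $\calX'$ explicitly via blow-ups at the valuations $\val(x_i)$ and call the final identification ``standard,'' whereas the paper obtains $\calX'$ abstractly from properness and justifies that identification through the semistable vertex set description of the retraction from Baker--Payne--Rabinoff.
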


In other words, the natural diagram 
\begin{center}
    \begin{tikzcd}
        \Div_{K,d}^+(X)\arrow[r,"\rho_{X,\ast}"] \arrow[d,hookrightarrow,"i"'] &\Div_d^+(\Gamma)\arrow[d,"\simeq"]\\
        X_{d}^{an} \arrow[r,"\trop_{X_d}"] & \Gamma_d
    \end{tikzcd}
\end{center}
commutes. 

\begin{proof}[Proof of Proposition~\ref{prop_spec=trop}] 
Suppose first that $K=\overline{K}$ is algebraically closed. Let $D=\sum_{i=1}^na_ip_i$ be an effective $K$-split divisor on $X$. Since $\overline{\Div}_d^+(\calX)$ is proper over $R$, we find a unique semistable model $\calX'$ over $\calX$ as well as a relative effective Cartier divisor $\calD$ on $\calX'$ such that
\begin{itemize}
\item the generic fiber of $\calD$ is equal to $D$, 
\item the support $\supp(\calD_0)$ in the special fiber does not meet the nodes of $\calX'_0$, and
\item $\supp(\calD_0)\cap E\neq\emptyset$ for every exceptional component $E$ of $\calX'_0$ over $\calX_0$. 
\end{itemize}

By \cite[Theorem 4.11]{BakerPayneRabinoff_structure} (also see \cite[Theorem 4.3.1]{Berkovich_book}), the semistable model $\calX'$ gives rise to a semistable vertex set $V$ in $X^{an}$, i.e. a set of points $v$ in $X^{an}$ whose complement is a collection of closed pointed discs and annuli. The vertices $v$ are precisely the vertices in the dual graph of $\calX_0$ and the edges of the dual graph correspond to the annuli in $X^{an}-V$.   

 Let $r\colon X^{an}\rightarrow X_k$ be the reduction map. Then the pointed discs in $X^{an}$ are given by $B(v)=r^{-1}(U_v)-V$ where $U_v$ is the open subset of a component in $\calX'_0$ given by removing all of its nodes, and the annuli are given by $B(e)=r^{-1}(x_e)$, where the $x_e$ are the nodes of $\calX'_0$. The restriction of the retraction map $\rho_X$ to a pointed disc $B(v)$ shrinks all points $B(v)$ to the corresponding point $v\in\Gamma$ and its restriction to an annulus $B(e)$ is given by the retraction of the annulus to its skeleton which is isometric to $e$.

So, if the point $p_i$ extends to a component $X_{0,v_i}'$ of $\calX_0'$ via $\calD$, its reduction is an element of $U_{v_i}$. Therefore $p_i$ is a point of $B(v_i)$ and thus $\rho_X(p_i)=v_i$. So, by linearity, we have:
\begin{equation*}
    \rho_{X,\ast}(D)=\sum_{i=1}^na_i v_i=\sum_{v\in V(G')} \deg\big(D\vert_{X_{v}}\big) \cdot v =\mdeg_{\calX'_0} (\calD_0) \ .
\end{equation*}

The general case, when $K$ may not be algebraically closed, follows from the invariance of $\trop_{X_d}$ and $\rho_X$ under base change by a non-Archimedean extension $K'$ of $K$.
\end{proof}

\subsection{Proof of Theorem~\ref{thm_skeleton=symmetricpower}}
We conclude this section with the proof of Theorem~\ref{thm_skeleton=symmetricpower}.

\begin{proof}[Proof of Theorem~\ref{thm_skeleton=symmetricpower}]
By Propositions~\ref{prop_sympow=polysimcplx} and~\ref{prop_skel=sympow}, there is a natural isomorphism $\mu_{X_d}\colon\Gamma_d\xrightarrow{\sim}\Sigma(X_d)$. What remains to show is that the diagram
\begin{center}\begin{tikzcd}
 X_d^{an} \arrow[dr,"\rho_{X_d}"'] \arrow[drrr, bend left, "\trop_{X_d}"]& & &\\
 & \Sigma(X_d) & & \Gamma_d \arrow[ll, "\mu_{X_d}","\sim"']
\end{tikzcd}\end{center}
commutes. 

Consider the nodes of the special fiber of $\calX$ given by by local equations $x_iy_i=a_i$ for $a_i\in R$ (for $i=1,\ldots, r$). Write the nodes in $\calX'$ above $x_iy_i=a_i$ as $x_j^{(i)}y_j^{(i)}=t_j^{(i)}$ for $j=1,\ldots, k_i$ and coordinates $t_j^{(i)}$ on $\overline{\Div}_d^+(\calX)$. Write $\calD$ for the relative effective Cartier divisor on $\calX'$ that extends $D$ on $X$. Then the tropicalization $\trop_{X_d}(D)$ is given by the following data: 
\begin{itemize}
    \item the dual graph $G_{\calX'}$ of the special fiber of $\calX'$;
    \item the chain contraction $G_{\calX'}\rightarrow G_{\calX}$ that is given by stabilization and the given isomorphism $(\calX')_{st}\xrightarrow{\sim}\calX$;
    \item the edge length of the edge $e_{j}^{(i)}$ corresponding to the node $x_j^{(i)}y_j^{(i)}=t_j^{(i)}$ given by $\vert e_{j}^{(i)}\vert =\val(t_j^{(i)})$.
\end{itemize}

In this situation the special fiber of $\overline{\Div}_d^+(\calX)$ is locally given by the equations $a_i=t_1^{(i)}\cdots t_{k_i}^{(i)}$ on $\overline{\Div}_d^+(\calX)$. By Sections~\ref{section_troppolystablestandardmodel}, \ref{section_skeletonpolystablestandardmodel}, and  \ref{section_skeletonsmallneighborhood}, the retraction to the skeleton is given by sending $x\in X_d^{an}$ to $\big(-\log\vert t_1^{(i)}\vert_x,\ldots, -\log\vert t_{k_i}^{(i)}\vert_x\big)_{i=1}^r$ in $\Delta(\vec{k},\vec{a})$. But these are precisely the edge lengths $\vert e_{j}^{(i)}\vert =\val(t_j^{(i)})$ and so the above diagram commutes. 
\end{proof}

\section{Functoriality}\label{section_functoriality}

\noindent There are two classes of tautological maps associated to symmetric powers:
\begin{enumerate}[(i)]
\item For $\mu=(m_1,\ldots, m_n)\in\Z_{\geq 0}^n$ and $\delta=(d_1,\ldots, d_n)\in\Z_{\geq 0}^n$ such that $m_1d_1+\ldots +m_nd_n=d$, we have the \emph{diagonal morphism}
\begin{equation*}\begin{split}
\phi_{\mu,\delta}\colon X_{d_1}\times \cdots \times X_{d_n}&\longrightarrow X_d \\
(D_1,\ldots, D_n)& \longmapsto m_1 D_1 + \ldots m_n D_n \ .
\end{split}\end{equation*}
\item For $d\geq 0$ we have the \emph{Abel-Jacobi map}
\begin{equation*}\begin{split}
\alpha_d\colon X_d&\longrightarrow \Pic_d(X)\\
D &\longmapsto \calO_X(D) \ .
\end{split}\end{equation*} 
\end{enumerate} 
In this section, we show that the process of tropicalization naturally commutes with both classes of morphisms.

\subsection{Diagonal morphisms}
Let $\mu=(m_1,\ldots, m_n)\in\Z_{\geq 0}^n$ and $\delta=(d_1,\ldots, d_n)\in\Z_{\geq 0}^n$ be such that $m_1d_1+\ldots +m_nd_n=d$. Define the \emph{tropical diagonal map} $\phi_{\mu,\delta}^{trop}\colon \Gamma_{d_1}\times \cdots \times \Gamma_{d_n}\rightarrow \Gamma_d$  by the association
\begin{equation*}
(D_1,\ldots, D_n) \longmapsto m_1 D_1 + \ldots m_n D_n \ .
\end{equation*}

\begin{proposition}
The tropical diagonal map is a morphism of colored polysimplicial complexes that makes the diagram
\begin{center}\begin{tikzcd}
X_{d_1}^{an}\times \cdots \times X_{d_n}^{an} \arrow[rrrr,"\trop_{X_{d_1}}\times \cdots \times \trop_{X_{d_n}}"] \arrow[d,"\phi_{\mu,\delta}^{an}"] & & & & \Gamma_{d_1}\times \cdots \times \Gamma_{d_n}\arrow[d,"\phi_{\mu,\delta}^{trop}"]\\
X_d^{an}\arrow[rrrr,"\trop_{X_d}"] & & & & \Gamma_d
\end{tikzcd}\end{center}
commute. 
\end{proposition}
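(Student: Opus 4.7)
My plan is to reduce the verification of the commuting diagram to the dense subset of $K$-split effective divisors, where tropicalization is computed by Baker's specialization homomorphism (Proposition~\ref{prop_spec=trop}), and to exploit its linearity. The claim that $\phi_{\mu,\delta}^{trop}$ is a morphism of colored polysimplicial complexes will then follow from unpacking the linear formula cell-by-cell.

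First I would verify that every map in the diagram is continuous. Theorem~\ref{thm_skeleton=symmetricpower} writes $\trop_{X_d}$ as $\mu_{X_d}^{-1}\circ\rho_{X_d}$, a composition of the continuous retraction to the skeleton with a homeomorphism, so each tropicalization map is continuous; and $\phi_{\mu,\delta}^{an}$ is continuous by functoriality of analytification. The tropical diagonal, read through the identification $\Gamma_{d_i}=\Div^+_{d_i}(\Gamma)$, is given by the linear formula $(D_1,\ldots,D_n)\mapsto\sum_i m_iD_i$ and is therefore continuous on the product of geometric realizations. I would then pass to the completion of an algebraic closure of $K$, using the base-change invariance of tropicalization noted at the end of Section~\ref{section_specialization}, so that we may assume $K$ is algebraically closed and $K$-split effective divisors are dense in each $X_{d_i}^{an}$ via the injection $i$ appearing before Proposition~\ref{prop_spec=trop}.

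For $K$-split divisors the desired commutativity reduces to the linearity identity
\begin{equation*}
\trop_{X_d}\bigl(\phi_{\mu,\delta}^{an}(D_1,\ldots,D_n)\bigr) = \rho_{X,*}\Bigl(\sum_i m_iD_i\Bigr) = \sum_i m_i\,\rho_{X,*}(D_i) = \phi_{\mu,\delta}^{trop}\bigl(\trop_{X_{d_1}}(D_1),\ldots,\trop_{X_{d_n}}(D_n)\bigr),
\end{equation*}
in which the outer equalities invoke Proposition~\ref{prop_spec=trop} and the middle one is the linearity of the specialization homomorphism proved in \cite{Baker_specialization}. Continuity of both compositions around the diagram, combined with density of $K$-split divisors, then upgrades the identity to all of $X_{d_1}^{an}\times\cdots\times X_{d_n}^{an}$.

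Finally, to see that $\phi_{\mu,\delta}^{trop}$ respects the polysimplicial structure, I would fix a cell of the source indexed by stable pairs $(G_1',D_1),\ldots,(G_n',D_n)$ over $G$, take a common subdivision $G'$ of the $G_i'$ over $G$, pull the $D_i$ back to $G'$, and form the combined divisor $D=\sum_i m_iD_i$ on $G'$; stabilizing by contracting the subdivision edges along which $D$ vanishes at every interior vertex produces a stable pair over $G$ that indexes a cell of $\Gamma_d$. Because the coefficients $m_i$ only rescale multiplicities and not positions, the edge-length data inherited from $G$ are preserved, and the map is affine on each piece of a refinement of the source that records the walls where divisor points from distinct $D_i$'s coalesce or meet vertices of $G$. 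The main obstacle I anticipate is precisely this combinatorial bookkeeping of walls, refinements, and stabilizations of stable pairs; it is unavoidable but entirely formal once one has the calculus developed in Section~\ref{section_sympow}.
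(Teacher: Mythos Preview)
Your proposal is correct and follows essentially the same route as the paper: reduce to $K$-split divisors via Proposition~\ref{prop_spec=trop}, use the linearity of the specialization map, and then pass to the general case by density (after assuming $K$ algebraically closed) and continuity from Theorem~\ref{thm_skeleton=symmetricpower}, with base-change invariance handling the non-closed case. The paper's own proof omits the verification that $\phi_{\mu,\delta}^{trop}$ is a morphism of colored polysimplicial complexes, which you sketch in your final paragraph.
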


\begin{proof}
 The linearity of the specialization map from Section~\ref{section_specialization} implies the commutativity of 
\begin{center}\begin{tikzcd}
\Div_{K,d_1}^+(X)\times \cdots \times \Div_{K,d_n}^+(X) \arrow[rrrr,"\rho_{X,\ast}\times \cdots \times \rho_{X,\ast}"] \arrow[d,"\phi_{\mu,\delta}"] & & & & \Div_{d_1}^+(\Gamma)\times \cdots \times \Div_{d_n}^+(\Gamma)\arrow[d,"\phi_{\mu,\delta}^{trop}"]\\
\Div_{K,d}^{+}(X)\arrow[rrrr,"\rho_{X,\ast}"] & & & & \Div_d^{+}(\Gamma) \ .
\end{tikzcd}\end{center} 

Suppose first that $K=\overline{K}$ is algebraically closed. Then the monoids $\Div_{K,d_1}^+(X)\times \cdots \times \Div_{K,d_n}^+(X)$ and $\Div_{K,d}^{+}(X)$ of effective divisors are dense in $X_{d_1}^{an}\times \cdots \times X_{d_n}^{an}$ and $X_d^{an}$ respectively. Therefore the continuity of $\trop_{X_{d_1}}\times \cdots \times \trop_{X_{d_n}}$ and $\trop_{X_d}$ (coming from Theorem~\ref{thm_skeleton=symmetricpower}) together with Proposition~\ref{prop_spec=trop} implies the claim.

The general case, when $K$ may not be algebraically closed, follows from the compatibility of the tropicalization map with base changes by non-Archimedean extensions $K'$ of $K$. 
\end{proof}

\subsection{Abel-Jacobi map}
Denote by $\Rat(\Gamma)$ the abelian group of \emph{rational functions} on $\Gamma$, i.e. the group of continuous piecewise integer linear functions on $\Gamma$. There is a natural homomorphism
\begin{equation*}\begin{split}
    \div\colon \Rat(\Gamma)&\longrightarrow \Div(\Gamma)\\
    f&\longmapsto \sum_{p\in \Gamma}\ord_p(f) \cdot p
\end{split}\end{equation*}
where $\ord_p(f)$ denotes the sum of all outgoing slopes at the point $p$. Its image is the subgroup $\PDiv(\Gamma)$ of principal divisors in $\Div(\Gamma)$. One can verify that $\PDiv(\Gamma)$ is, in fact, a subgroup of $\Div_0(\Gamma)$. The \emph{Picard group} $\Pic(\Gamma)$ is defined to be the quotient $\Div(\Gamma)/\PDiv(\Gamma)$. Denote the image of a divisor $D$ on $\Gamma$ in $\Pic(\Gamma)$ by $[D]$. 

Since $\PDiv(\Gamma)$ is actually a subgroup of $\Div_0(\Gamma)$, the quotient respects degrees and $\Pic(\Gamma)$ naturally decomposes into a disjoint union of union of $\Pic_d(\Gamma)$, each of which is naturally a torsor over $\Pic_0(\Gamma)$. By the tropical Abel-Jacobi Theorem \cite[Theorem 6.2]{MikhalkinZharkov} and \cite[Theorem 3.4]{BakerFaber}, the Picard group naturally carries the structure of a principally polarized tropical abelian variety.

Let $X$ be a smooth projective curve over $K$. In \cite[Theorem 1.3]{BakerRabinoff_skelJac=Jacskel}, Baker and Rabinoff show that the non-Archimedean skeleton $\Sigma(\Pic_d(X))$ of $\Pic_d(X)^{an}$ is naturally isomorphic (as a principally polarized tropical abelian variety) to the Picard variety $\Pic_d(\Gamma)$ and that the continous retraction $\rho_{\Pic_d(X)}\colon \Pic_d(X)^{an}\rightarrow \Sigma(\Pic_d(X))$ to the skeleton naturally commutes with the tropical Abel-Jacobi map $\alpha_q\colon X\rightarrow \Pic_0(X)$ given by $p\mapsto [p-q]$ for a fixed point $q$ of $X$. We expand on their result in the following Theorem~\ref{thm_AbelJacobi}.

\begin{theorem}\label{thm_AbelJacobi}
For $d\geq 0$, the \emph{tropical Abel-Jacobi map} $\alpha_d^{trop}\colon \Gamma_d\rightarrow \Pic_d(\Gamma)$ given by the association $D\mapsto [D]$ naturally makes the diagram
\begin{center}\begin{tikzcd}
X_d^{an} \arrow[d,"\alpha_d^{an}"] \arrow[rrr,"\trop_{X_d}"] &&& \Gamma_d \arrow[d,"\alpha_d^{trop}"] \\
\Pic_d(X)^{an} \arrow[rr,"\rho_{\Pic_d(X)}"] && \Sigma(\Pic_d(X))\arrow[r,"\sim"]&\Pic_d(\Gamma)
\end{tikzcd}\end{center}
commute. 
\end{theorem}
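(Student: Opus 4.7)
The plan is to reduce to the case $d=1$, which is \cite[Theorem 1.3]{BakerRabinoff_skelJac=Jacskel}, and then use linearity in the group $\Pic_0$. As a preliminary step, by the invariance of tropicalization, specialization, and skeleton retractions under non-Archimedean extensions (as in the proof of Proposition~\ref{prop_spec=trop}), one may assume $K=\overline{K}$ is algebraically closed. Then the image of $i\colon\Div_{K,d}^{+}(X)\hookrightarrow X_d^{an}$ is dense, and all four maps in the diagram of Theorem~\ref{thm_AbelJacobi} are continuous---$\trop_{X_d}$ by Theorem~\ref{thm_skeleton=symmetricpower}, and $\rho_{\Pic_d(X)}$ by \cite{BakerRabinoff_skelJac=Jacskel}---so it suffices to verify commutativity on the image of $i$.

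Fix a base point $q\in X(K)$, write $\alpha_q\colon X\to\Pic_0(X)$ for the pointed Abel-Jacobi map $p\mapsto[p-q]$, and $\alpha_q^{trop}\colon\Gamma\to\Pic_0(\Gamma)$ for its tropical counterpart. For a $K$-split effective divisor $D=p_1+\cdots+p_d$ one has
\[
\alpha_d(D) \;=\; d[q] \;+\; \sum_{i=1}^{d}\alpha_q(p_i),
\]
where the sum takes place in $\Pic_0(X)$ and the first term translates into $\Pic_d(X)$. The analogous identity on the tropical side reads $\alpha_d^{trop}\big(\sum_i\rho_X(p_i)\big) = d[q] + \sum_i\alpha_q^{trop}(\rho_X(p_i))$, using that $\trop_{X_d}(i(D)) = \sum_i\rho_X(p_i)$ by Proposition~\ref{prop_spec=trop}.

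It therefore suffices to check that $\rho_{\Pic_d(X)}$ intertwines the two right-hand sides, i.e. that
\[
\rho_{\Pic_0(X)}\Big(\textstyle\sum_{i}\alpha_q^{an}(p_i)\Big) \;=\; \sum_{i}\alpha_q^{trop}\big(\rho_X(p_i)\big),
\]
and that translation by $d[q]$ is compatible with the retraction. For each summand individually the first statement is precisely \cite[Theorem 1.3]{BakerRabinoff_skelJac=Jacskel}. The main obstacle is therefore to establish additivity of the retraction $\rho_{\Pic_0(X)}\colon\Pic_0(X)^{an}\to\Sigma(\Pic_0(X))\cong\Pic_0(\Gamma)$, i.e. to show it is a homomorphism of topological groups. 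This, however, is built into \cite{BakerRabinoff_skelJac=Jacskel}: under their identification, $\rho_{\Pic_0(X)}$ is precisely the tropicalization homomorphism associated to the Bosch--L\"utkebohmert uniformization of the abelian variety $\Pic_0(X)$, which is tautologically a continuous group homomorphism, and translation by a fixed skeleton point is an equivariant homeomorphism. Combining additivity of $\rho_{\Pic_0(X)}$, the Baker--Rabinoff theorem applied to each point $p_i$, and translation-equivariance yields commutativity on $i\big(\Div_{K,d}^{+}(X)\big)$, and the density-plus-continuity argument from the first paragraph then propagates commutativity to all of $X_d^{an}$.
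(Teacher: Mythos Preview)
Your proof is correct and shares the paper's overall architecture: reduce to $K=\overline{K}$, verify commutativity on the dense set $\Div_{K,d}^{+}(X)\hookrightarrow X_d^{an}$, and propagate via continuity. The difference lies in how you establish commutativity on $K$-split divisors. The paper uses the slope formula \cite[Theorem~5.14]{BakerPayneRabinoff} to show directly that specialization sends principal divisors to principal divisors, hence descends to a homomorphism $\rho_{X,\ast}\colon\Pic_d(X)\to\Pic_d(\Gamma)$, and then invokes \cite[Proposition~5.3]{BakerRabinoff_skelJac=Jacskel} to identify this with the skeleton retraction. You instead decompose $\alpha_d(D)$ along a base point into a sum of $\alpha_q(p_i)$'s, apply the $d=1$ case \cite[Theorem~1.3]{BakerRabinoff_skelJac=Jacskel} to each summand, and reassemble using that $\rho_{\Pic_0(X)}$ is a group homomorphism coming from the Raynaud uniformization. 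Your route is a clean reduction to the known pointed case and avoids re-deriving the descent to $\Pic$; the paper's route is slightly more self-contained in that it does not appeal to the homomorphism property of the retraction, only to the identification of skeletons. Both arguments implicitly use continuity of $\alpha_d^{trop}$ and the compatibility of the retractions on $\Pic_0$ and $\Pic_d$ under translation by $d[q]$; you might make the latter point more precise by noting that translation by a $K$-rational class is an isomorphism of torsors over the uniformized abelian variety and hence intertwines the canonical skeletons.
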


We remark that a version of Theorem \ref{thm_AbelJacobi} has also appeared in \cite[Section 7]{Shen_Lefschetz}. We include the proof here for completeness.

\begin{proof}[Proof of Theorem \ref{thm_AbelJacobi}]
Suppose first that $K=\overline{K}$ is algebraically closed. There is a natural homomorphism 
\begin{equation*}
    \trop\colon \Rat(X)^\ast\longrightarrow \Rat(\Gamma)
\end{equation*} that is given by sending a non-zero rational function $f\in\Rat(X)^\ast$ to the map $x\mapsto -\log\vert f\vert_x$ on $\Gamma$, thought of as the non-Archimedean skeleton of $X^{an}$. Since $K$ is algebraically closed, we have $\Div(X)=\Div_K(X)$. By the slope formula \cite[Theorem 5.14]{BakerPayneRabinoff}, we have $\div(\trop(f))=\rho_{X,\ast}(\div(f))$ where $\rho_{X,\ast}$ denotes the specialization map $\rho_{X,\ast}\colon\Div(X)\simeq\Div_K(X)\rightarrow \Div(\Gamma)$ discussed in Section~\ref{section_specialization} above. Therefore the specialization map descends to a homomorphism
\begin{equation*}
    \rho_{X,\ast}\colon \Pic_d(X)\longrightarrow \Pic_d(\Gamma) 
\end{equation*}
and this immediately implies that the diagram
\begin{equation}\label{eq_AbelJacobikbar}
    \begin{tikzcd}
        \Div_d^+(X)\arrow[rr,"\rho_{X,\ast}"]\arrow[d,"\alpha_d"]&&\Div_d^+(\Gamma) \arrow[d,"\alpha_d^{trop}"]\\
        \Pic_d(X)\arrow[rr,"\rho_{X,\ast}"]&&\Pic_d(\Gamma)
    \end{tikzcd}
\end{equation}
commutes.

In \cite[Proposition 5.3]{BakerRabinoff_skelJac=Jacskel}, the authors show that the Picard group $\Pic_d(\Gamma)$ is naturally isomorphic (as a principally polarized tropical abelian variety) to the non-Archimedean skeleton $\Sigma(\Pic_d(X))$ of $\Pic_d(X)^{an}$ such that the induced diagram
\begin{center}
    \begin{tikzcd}
       \Pic_d(X) \arrow[rr,"\rho_{X,\ast}"] 
       \arrow[d,"\subseteq"]&& \Pic_d(\Gamma)\arrow[d,"\simeq"]\\
       \Pic_d(X)^{an}\arrow[rr,"\rho_{\Pic_d(X)}"]&& \Sigma(\Pic_d(X))
    \end{tikzcd}
\end{center}
commutes. In fact, Baker and Rabinoff only show this statement for $\Pic_0(X)$, but since $K$ is algebraically closed, we may choose a point $p\in X(K)$ and identify $\Pic_d(X)$ with $\Pic_0(X)$. 

Note that, since $K$ is assumed to be algebraically closed, both $\Div_d^+(X)$ and $\Pic_d(X)$ are dense in $X_d^{an}$ and $\Pic_d(X)^{an}$.  Therefore, by Propostion~\ref{prop_spec=trop} above, since the maps $\rho_{\overline{\Div}_d^+(\calX)}$, $\rho_{\Pic_d(X)}$, and $\alpha_d$ are all continuous, the commutativity of diagram \eqref{eq_AbelJacobikbar} implies that
\begin{center}\begin{tikzcd}
X_d^{an} \arrow[d,"\alpha_d^{an}"] \arrow[rrr,"\trop_{X_d}"] &&& \Gamma_d \arrow[d,"\alpha_d^{trop}"] \\
\Pic_d(X)^{an} \arrow[rr,"\rho_{\Pic_d(X)}"] && \Sigma(\Pic_d(X))\arrow[r,"\sim"]&\Pic_d(\Gamma)
\end{tikzcd}\end{center}
commutes. 

The general case, when $K$ may not be algebraically closed, again follows from the invariance of the projection to the skeleton under base change by non-Archimedean field extensions.
\end{proof}

 


 


\section{A Bieri-Groves-Theorem}\label{section_BieriGroves}

In this section we deduce the following Theorem~\ref{thm_BieriGroves} from the Bieri-Groves-Theorem (see \cite[Theorem A]{BieriGroves} and \cite[Theorem 2.2.3]{EinsiedlerKapranovLind}), which immediately implies Theorem~\ref{thm_BieriGrovessympow} from the introduction. 

\begin{theorem}\label{thm_BieriGroves}
Let $X$ be a smooth and proper variety over $K$ and let $\calX$ be a proper polystable model of $X$. Suppose that $Y$ is a closed subvariety of $X$ (defined over $K$) that is equidimensional of dimension $\delta$. Then the tropicalization 
\begin{equation*}
    \Trop_{\calX}(Y):=\rho_{\calX}(Y^{an})\subseteq \Sigma(\calX)
\end{equation*}
of $Y$ (as a subspace of $X$) is a $\Lambda$-rational polyhedral complex in $\Sigma(\calX)$ of dimension $\leq \delta$. If $\calX$ has a deepest stratum $E$ that is a point and $\Trop_\calX(Y)$ contains a point in the interior of $\Sigma_E$ (as in Section \ref{section_skeletonsmallneighborhood}), then the dimension of $\Trop_\calX(Y)$ is equal to $\delta$. 
\end{theorem}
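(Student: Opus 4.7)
The plan is to localize the problem on small charts and reduce to the classical Bieri-Groves theorem for subvarieties of algebraic tori. Cover $\calX$ by small open neighborhoods $\calU_\alpha$ equipped with small \'etale charts $\gamma_\alpha \colon \calU_\alpha \to \calZ(\vec{n}_\alpha,\vec{k}_\alpha,\vec{a}_\alpha)$. By the compatibility of $\rho_\calX$ with these charts from Sections~\ref{section_troppolystablestandardmodel}--\ref{section_skeletonsmallneighborhood} and by Proposition~\ref{prop_skel=sympow}, both $\Sigma(\calX)$ and the image $\Trop_\calX(Y)$ decompose stratum by stratum, so it is enough to show that for each stratum $E$ the intersection $\Trop_\calX(Y) \cap \Delta(E)$ is a $\Lambda$-rational polyhedral subset of $\Delta(E)$ of dimension at most $\delta$, and to check that these local pieces glue along the face morphisms of $\Delta(\calX)$.

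On a fixed small chart $\calZ(\vec{n},\vec{k},\vec{a})$, the tropicalization map is given by $x \mapsto \big(-\log|t_j^{(i)}|_x\big)_{i,j}$ on precisely those coordinates appearing in the relations $t_1^{(i)}\cdots t_{k_i}^{(i)} = a_i$. Over the analytic open subset $\calV \subseteq \calZ(\vec{n},\vec{k},\vec{a})^\circ$ on which all these $t_j^{(i)}$ are invertible, this factors through the classical coordinate-wise valuation
\begin{equation*}
    \trop\colon \G_m^{k_1+\cdots+k_r,an} \longrightarrow \R^{k_1+\cdots+k_r}.
\end{equation*}
The image of $(\gamma^\circ)^{-1}(Y^{an}) \cap \calV$ under this map is a $\Lambda$-rational polyhedral complex of dimension at most $\delta$ by the classical Bieri-Groves theorem \cite[Theorem~2.2.3]{EinsiedlerKapranovLind}, and its intersection with the affine subspace $\Delta(\vec{k},\vec{a}) = \{v : \sum_j v_j^{(i)} = \val(a_i)\}$ yields the desired polyhedral subset of $\Delta(E)$. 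The retraction to the skeleton of a face is then the restriction obtained by setting the appropriate coordinates $v_j^{(i)}$ equal to zero, which is a $\Lambda$-rational operation, so these local pieces assemble into a coherent polyhedral complex on $\Sigma(\calX)$.

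For the dimension lower bound, assume $\calX$ admits a point stratum $E$ and a point $\bar v$ in the interior of $\Delta(E)$ belonging to $\Trop_\calX(Y)$. The condition that $E$ is a point forces $n_i = k_i$ in any small chart around $E$, so there are no free coordinates outside of those being tropicalized, and any preimage of $\bar v$ under $\trop_{\vec{n},\vec{k},\vec{a}}$ lies in the locus where every $t_j^{(i)}$ is invertible. Such a preimage maps into $\G_m^{k_1+\cdots+k_r,an}$ intersected with the hypersurfaces $t_1^{(i)}\cdots t_{k_i}^{(i)} = a_i$; an irreducible component of $Y$ through this preimage still has dimension $\delta$, and the classical purity of tropicalization of an equidimensional subvariety of a torus (see \cite[Proposition~3.5]{Gubler_guide}) forces its tropical image to be purely $\delta$-dimensional in a neighborhood of $\bar v$. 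Since $\bar v$ lies in the interior of $\Delta(E)$, no dimension is lost upon intersecting with the sum-relation hyperplanes.

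The main obstacle I anticipate is the bookkeeping needed to verify that the local polyhedral structures transported from different small charts glue coherently along the face morphisms, i.e.\ that the structure obtained on a stratum $E' \succ E$ from a chart adapted to $E$ coincides with the one coming from a chart directly adapted to $E'$. This should follow from the explicit form of the face morphisms as sublattice-preserving inclusions $v_j^{(i)} \mapsto 0$ together with the naturality of classical tropicalization under toric coordinate projections. Finally, to match the usual statement of Bieri-Groves over an algebraically closed base, I would first base change to $\widehat{\overline{K}}$ and use that the formation of $\rho_\calX$ and of the tropicalization map is compatible with non-Archimedean field extensions, which preserves both the $\Lambda$-rationality and the dimension.
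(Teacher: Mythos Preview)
Your overall strategy---localize on small charts and reduce to the classical Bieri--Groves theorem for subvarieties of tori---is exactly what the paper does. There is, however, one genuine gap. You assert that ``the image of $(\gamma^\circ)^{-1}(Y^{an}) \cap \calV$ under this map is a $\Lambda$-rational polyhedral complex of dimension at most $\delta$ by the classical Bieri--Groves theorem.'' But the classical theorem takes as input a closed algebraic subvariety of $\G_m^n$, not an analytic subset, and you never explain why the coordinate-wise valuation image of $\gamma^\circ(Y^{an}\cap\calU^\circ)$ agrees with the tropicalization of an algebraic variety. The paper fills this in two steps: since $\gamma$ is \'etale, the \emph{algebraic} image $\gamma(Y\cap U)$ is locally closed in the generic fiber of $\calZ(\vec n,\vec k,\vec a)$ (identified with a torus via Lemma~\ref{lemma_localtrop=actualtrop}); then Draisma's tropical lifting lemma \cite[Proposition~11.5]{Gubler_guide}, \cite[Lemma~3.10]{Ulirsch_tropcomplogreg} guarantees that the tropicalization of a locally closed set equals that of its Zariski closure $\overline{Y}_\gamma$. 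Only at that point does Bieri--Groves apply, to $\overline{Y}_\gamma$. Without this passage to an honest closed subvariety your reduction is not justified, and in particular your later appeal to purity in the dimension-lower-bound argument has no algebraic variety to which it applies.

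Two minor remarks. Your citation of Proposition~\ref{prop_skel=sympow} is off: that result is specific to symmetric powers, while the present theorem is for arbitrary polystable models; the relevant general statement is Proposition~\ref{prop_skeleton=polysimplicialcomplex}. And for the lower bound, your observation that a zero-dimensional deepest stratum forces $n_i=k_i$ in every small chart is correct and is precisely what makes $\Delta(E)$ full-dimensional in the ambient affine space; the paper phrases the conclusion via the balancing condition rather than purity, but the content is the same.
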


Let $\G_m^n=\Spec K\big[t_1^{\pm 1},\ldots, t_n^{\pm 1}\big]$ be a split algebraic torus over $K$. Recall e.g. from \cite{Gubler_guide} that there is a natural proper and continuous tropicalization map
\begin{equation*}
    \trop_{\G_m^n}\colon \big(\G_m^n\big)^{an}\longrightarrow \R^n \ .
\end{equation*}
It is given by sending a point $x\in\big(\G_m^n\big)^{an}$, which corresponds to a multiplicative seminorm $\vert .\vert_x$ on $K\big[t_1^{\pm 1},\ldots, t_n^{\pm 1}\big]$ extending the absolute value on $K$, to the point $\big(\vert t_1\vert_x,\ldots, \vert t_n\vert_x\big)\in\R^n$. Using this map, the \emph{tropicalization} of a subvariety $Y\subseteq \G_m^n$ may be defined to be the projection 
\begin{equation*}
    \Trop_{\G_m^n}(Y):=\trop_{\G_m^n}(Y^{an})
\end{equation*}
of $Y^{an}\subseteq \big(\G_m^n\big)^{an}$ to $\R^n$. 

\begin{lemma}\label{lemma_localtrop=actualtrop}
Let $\calZ_{\vec{n},\vec{k},\vec{a}}$ be a standard polystable model over $R$. Then the generic fiber is the algebraic torus $\G_m^{\vert \vec{n} \vert}$ and the natural diagram
\begin{center}
    \begin{tikzcd}
        \calZ_{\vec{n},\vec{k},\vec{a}}^\circ \arrow[rr,"\trop_{\vec{n},\vec{k},\vec{a}}"]\arrow[d,hookrightarrow,"\subseteq"'] && \Delta(\vec{k},\vec{a}) \arrow[d,hookrightarrow,"\subseteq"]\\
        \big(\G_m^{\vert \vec{n}\vert}\big)^{an} \arrow[rr,"\trop_{\G_m^{\vert\vec{n}\vert}}"]&&\R^{\vert\vec{n}\vert}
    \end{tikzcd}
\end{center}
of tropicalization maps commutes. 
\end{lemma}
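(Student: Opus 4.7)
The plan is to prove the lemma by unwinding the explicit definitions of both tropicalization maps and comparing them coordinate by coordinate.

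First, I would identify the generic fiber. Each factor $\calZ(n_i,k_i,a_i)$ has generic fiber $\Spec K[t_1^{(i)},\ldots,t_{n_i}^{(i)}]/(t_1^{(i)}\cdots t_{k_i}^{(i)}-a_i)$, and since $a_i\in K^\times$ the defining relation forces $t_1^{(i)},\ldots,t_{k_i}^{(i)}$ to be units. Inverting the remaining variables as well yields the open subtorus structure, which identifies the relevant portion of the generic fiber with $\G_m^{|\vec n|}$ and provides the left vertical open immersion $\calZ(\vec n,\vec k,\vec a)^\circ\hookrightarrow(\G_m^{|\vec n|})^{\mathrm{an}}$.

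Next I would describe the right vertical inclusion $\Delta(\vec k,\vec a)\hookrightarrow\R^{|\vec n|}$ explicitly. The simplex $\Delta(\vec k,\vec a)\subseteq\R^{k_1+\cdots+k_r}$ embeds in $\R^{|\vec n|}$ by placing each block of simplex coordinates $(v_1^{(i)},\ldots,v_{k_i}^{(i)})$ into the first $k_i$ slots of the $i$-th copy of $\R^{n_i}$, with zeros filling the remaining $n_i-k_i$ slots; equivalently, this is the locus in $\R_{\geq 0}^{|\vec n|}$ cut out by $\sum_{j=1}^{k_i}v_j^{(i)}=\val(a_i)$ together with $v_j^{(i)}=0$ for $j>k_i$.

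The commutativity then reduces to direct computation using the formulas from Sections~\ref{section_troppolystablestandardmodel} and~\ref{section_skeletonpolystablestandardmodel}. For $x\in\calZ(\vec n,\vec k,\vec a)^\circ$, both maps are defined as coordinate-wise $-\log|\cdot|_x$: the skeleton tropicalization records only the constrained coordinates $(-\log|t_j^{(i)}|_x)_{1\leq j\leq k_i}$, while the torus tropicalization records all of them; these manifestly agree on the shared coordinates, and the multiplicativity of $|\cdot|_x$ together with the relation $t_1^{(i)}\cdots t_{k_i}^{(i)}=a_i$ yields $\sum_{j=1}^{k_i}(-\log|t_j^{(i)}|_x)=\val(a_i)$, so the image lies in $\Delta(\vec k,\vec a)$ as required. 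The only nontrivial bookkeeping is matching the remaining torus coordinates $-\log|t_j^{(i)}|_x$ with $j>k_i$ against the zeros of the simplex embedding on the right; this is the step I expect will require the most care, and I would handle it by invoking the precise description of $\calZ(\vec n,\vec k,\vec a)^\circ$ and its associated seminorms from Section~\ref{section_skeletonpolystablestandardmodel}.
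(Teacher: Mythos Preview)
Your proposal is correct and follows essentially the same route as the paper's proof: both arguments reduce the commutativity to a coordinate-by-coordinate comparison of $-\log|t_j^{(i)}|_x$ and isolate the only nontrivial point as showing that $-\log|t_j^{(i)}|_x=0$ for $j>k_i$ on $\calZ(\vec n,\vec k,\vec a)^\circ$. The paper handles this step concretely via the boundedness condition defining $\calZ^\circ$: since $|f|_x\leq 1$ for all $f$ in the $R$-algebra, one has both $|t_j^{(i)}|_x\leq 1$ and $|(t_j^{(i)})^{-1}|_x\leq 1$, and multiplicativity then forces $|t_j^{(i)}|_x=1$; you should make this explicit rather than deferring to Section~\ref{section_skeletonpolystablestandardmodel}.
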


\begin{proof} Recall from Section \ref{section_polystableskeletons} that a \emph{polystable standard model} is an affine $R$-scheme of the form 
\begin{equation*}
    \calZ_{\vec{n},\vec{k},\vec{a}}=\calZ(n_1,k_1,a_1)\times_R\cdots\times_R\calZ(n_r,k_r,a_r) 
\end{equation*}
where each $\calZ(n_i,k_i,a_i)=\Spec A_i$ with $A_i=R[t^{(i)}_0,\ldots, t^{(i)}_{n_i}]/(t^{(i)}_0\cdots t^{(i)}_{k_i}-a_i)$ for $a_i\in R$ and  $\vec{n}=(n_1,\ldots, n_r)\in\NN^r$, $\vec{k}=(k_1,\ldots, k_r)\in\NN^r$ with $k_i\leq n_i$ and $\vec{a}=(a_1,\ldots, a_r)\in R^r$. Notice that the scheme-theoretic generic fiber is isomorphic to $\mathbb{G}_m^{\vert \vec{n}\vert}$, where $\vert \vec{n}\vert=n_1+\cdots +n_r$ and so $\calZ_{\vec{n},\vec{k},\vec{a}}^\circ$ is naturally a subset of $(\G_m^{\vert n\vert})^{an}$.

Given a point $x\in\calZ_{\vec{n},\vec{k},\vec{a}}^\circ$, using the definition from Section \ref{section_troppolystablestandardmodel}, we have: 
\begin{equation*}
    \trop_{\vec{n},\vec{k},\vec{a}}(x)=\Big(-\log\big\vert t_1^{(1)}\big\vert_x, \ldots, -\log\big\vert t_{k_1}^{(1)}\big\vert_x, \ldots, -\log\big\vert t_{1}^{(r)}\big\vert,\ldots, -\log\big\vert t_{k_r}^{(r)}\big\vert_x\Big) \ .
\end{equation*}
Let $j=k_i+1,\ldots, n_i$ and $i=1,\ldots, r$. Since $\vert .\vert_x$ is multiplicative, we have
\begin{equation*}
    \big\vert (t^{(i)}_j)\big\vert_x \cdot\big\vert (t^{(i)}_j)^{-1}\big\vert_x=\big\vert 1\big\vert_x=1 \ .
\end{equation*} 
Since $\vert.\vert_x$ is bounded and both $t_i^{(j)}$ and $(t_i^{(j)})^{-1}$ pull back to the coordinate ring of $\calZ_{\vec{n},\vec{k},\vec{a}}^\circ$, 
we have $\big\vert t^{(i)}_{j}\big\vert_x\leq 1$ as well as $\big\vert (t^{(i)}_{j})^{-1}\big\vert_x\leq 1$. This implies $\big\vert t_{j}^{(i)}\big\vert_x=1$ for $j=k_i+1,\ldots, n_i$ and $i=1,\ldots, r$. Therefore we have 
\begin{equation*}\begin{split}
    \trop_{\G_m^{\vert \vec{n}\vert}}(x)=&
    \big(-\log\big\vert t_1^{(1)}\big\vert_x,\ldots, -\log\big\vert t_{n_1}^{(1)}\big\vert_x,\ldots, -\log\big\vert t_{1}^{(r)}\big\vert_x, \ldots,-\log\big\vert t_{n_r}^{(r)}\big\vert_x \big)\\
    =&\Big(-\log\big\vert t_1^{(1)}\big\vert, \ldots, -\log\big\vert t_{k_1}^{(1)}\big\vert_x,0,\ldots, 0,\\
     &-\log\big\vert t_{1}^{(2)}\big\vert_x, \ldots, -\log\big\vert t_{k_2}^{(2)}\big\vert_x, 0, \ldots,0,\\
     & \ldots \\
    &  -\log\big\vert t_{1}^{(r-1)}\big\vert,\ldots, -\log\big\vert t_{k_{r-1}}^{(r-1)}\big\vert_x,0, \ldots,0,\\ 
    &  -\log\big\vert t_{1}^{(r)}\big\vert,\ldots, -\log\big\vert t_{k_r}^{(r)}\big\vert_x,0, \ldots,0 \Big)
\end{split}\end{equation*}
which is precisely the image of $\trop_{\vec{n},\vec{k},\vec{a}}(x)$ under the embedding $\R^{k_1+\ldots +k_r}\hookrightarrow \R^{n_1+\ldots+n_r}$.
\end{proof}

The proof of Theorem~\ref{thm_BieriGroves} closely follows along the lines of the proof of \cite[Theorem 1.1]{Ulirsch_tropcomplogreg}, the Bieri-Groves-Theorem for subspaces of log-regular varieties. 

\begin{proof}[Proof of Theorem~\ref{thm_BieriGroves}]
We need to show that that $\Trop_\calU(Y)=\Trop_\calX(Y)\cap\Sigma(\calU)$ is a $\Lambda$-rational polyhedral complex for a small 
\'etale open subset $\calU$ around every stratum $E$ of $\calX_0$. We may choose $\calU$ so that there is a small chart $\gamma\colon \calU\rightarrow \calZ(\vec{n},\vec{k},\vec{a})$. By the local description of $\rho_\calU$ in terms of the tropicalization map $\trop_{\vec{n},\vec{k},\vec{a}}\colon \calZ(\vec{n},\vec{k},\vec{a})^\circ\rightarrow \Delta(\vec{k},\vec{a})$ in Section~\ref{section_polystableskeletons}, we may identify $\Trop_\calU(Y)$ with the projection $\trop_{\vec{n},\vec{k},\vec{a}}(\gamma^\circ(Y^{an}\cap\calU^\circ))$ and, by Lemma~\ref{lemma_localtrop=actualtrop}, with $\trop_{\G_m^{\vert \vec{n}\vert}}(\gamma^\circ(Y^{an}\cap\calU^\circ))$.

Since $\gamma$ is 
\'etale, the image $\gamma(Y\cap U)$ is locally closed in $\G_m^{\vert\vec{n}\vert}$. Denote by $\overline{Y}_\gamma$ its closure in $\G_m^{\vert\vec{n}\vert}$. By a generalization of Draisma's tropical lifting lemma \cite[Lemma 4.4]{Draisma_tropicalsecants} (see \cite[Proposition 11.5]{Gubler_guide} and \cite[Lemma 3.10]{Ulirsch_tropcomplogreg}), the tropicalization of a locally closed subset is equal to the tropicalization of its closure and so we have
\begin{equation*}
    \Trop_\calU(Y^{an})=\Trop(\overline{Y}_\gamma)\cap\Delta(\vec{k},\vec{a}) \ .
\end{equation*}

The tropicalization $\Trop_\calU(Y^{an})$ is a $\Lambda$-rational polyhedral complex of dimension $\delta$ by the classical Bieri-Groves-Theorem \cite[Theorem A]{BieriGroves} and \cite[Theorem 2.2.3]{EinsiedlerKapranovLind}. Due to the intersection with $\Delta(\vec{k},\vec{a})$, the tropicalization $\Trop_\calU(Y^{an})$ might have dimension $\leq \delta$. If $E$ is a point, then the cell $\Sigma_E$ will be $\vert\vec{n}\vert$-dimensional and, if $\Trop_\calU(Y^{an})$ has a point in the interior of a cell, it will be part of an $\delta$-dimensional cell of $\Trop(\overline{Y}_\gamma)$ whose intersection with $\Delta(\vec{k},\vec{a})=\Sigma_E$ is $\delta$-dimensional, since $\Trop_{\G_m^{\vert\vec{n}\vert}}(\overline{Y}_\gamma)$ fulfills the balancing condition. 
\end{proof}


\section{Faithful tropicalization of polystable skeletons}\label{section_faithfultropicalization}

In this section we prove the following Theorem~\ref{thm_faithfultropicalization}, which is a generalization of \cite[Theorem 9.5]{GublerRabinoffWerner} to the  polystable case (when no extra divisor at infinity is present). It immediately implies Theorem~\ref{thm_faithfulsympow} from the introduction.

\begin{theorem}\label{thm_faithfultropicalization}
Let $X$ be a smooth and proper variety over $K$ and let $\calX$ be a proper  polystable model of $X$ over $R$. Then there is an open subset $U\subseteq X$ as well as a morphism $f\colon U\rightarrow \G_m^n$ such that the restriction of
$\trop\circ f^{an}$ to  $\Sigma(\calX)\subseteq U^{an}$ of
the induced tropicalization map
\begin{equation*}
\trop_f\colon U^{an}\xlongrightarrow{f^{an}}\G_m^{n,an}\xlongrightarrow{\trop}\R^n
\end{equation*}
is a homeomorphism onto its image in $\mathbb{R}^n$.
If $\mathcal{X}$ is a strictly polystable model for $X$, then the restriction of $\trop\circ f^{an}$ to each cell of $\Sigma(\calX)$ is unimodular.
\end{theorem}

\begin{proof}
Suppose that $\calX$ is strictly polystable. Fix a stratum $E$ of $\calX_0$ and choose a small chart $\gamma\colon \calU\rightarrow \calZ(\vec{n},\vec{k},\vec{a})$ around $E$ (with $\calU\subseteq \calX$). Let $U_{(E)}:=\calU_K$ be the generic fiber and let  $f_{(E)}\colon U_{(E)}\rightarrow \G_m^{\vert\vec{n}
\vert}$ be the base change of $\gamma$ to the generic fiber. By Lemma~\ref{lemma_localtrop=actualtrop} the tropicalization map 
\begin{equation*}
    \trop_{f_{(E)}}\colon U_{(E)}^{an}\xlongrightarrow{f_{(E)}^{an}} \G_m^{\vert \vec{n}\vert,an}\xlongrightarrow{\trop}\R^{\vert \vec{n}\vert}
\end{equation*}
naturally restricts to the projection to $\Delta(\vec{n},\vec{k},\vec{a})$  on $\calU^\circ\subseteq U_{(E)}^{an}$. Therefore, by Section~\ref{section_skeletonsmallneighborhood}, the restriction to the skeleton $\Sigma(\calU)\subseteq \calU^\circ\subseteq U_{(E)}^{an}$ is a unimodular homeomorphism onto its image in $\R^{\vert\vec{n}\vert}$.

Set $U=\bigcap_{E}U_{(E)}$, where $E$ is passing through all the strata of $\calX_0$. We take ${f=(f_{(E)})\colon U\longrightarrow \G_m^n}$, where $n=\sum_{E}\vert\vec{n}_e\vert$ and $f$ is given by  $f(x)= \big(f_{(E)}(x)\big)_{E}\in\G_m^{n}$. The above reasoning shows that the restriction to the skeleton $\Sigma(\calX)$ is a  homeomorphism on every polysimplex in $\Sigma(\calX)$, and in the case that $\calX$ is strictly polystable, it is unimodular as well. 

We now show that $\trop_f$ is injective: Consider two points $x,x'\in \Sigma(\calX)$ such that $\trop_f(x)=\trop_f(x')$. Then $\trop_{f_{(E)}}(x)=\trop_{f_{(E)}}(x')$ for all strata $E$ of $\calX_0$.
Suppose that $x$ is in the relative interior of $\Delta(E)$ and $x'$ is in the relative interior of $\Delta(E')$. By Lemma~\ref{lemma_localtrop=actualtrop} and Section~\ref{section_skeletonsmallneighborhood}, we then have
\begin{equation*}
    x=\rho_\calX(x)=\rho_\calX(x') \ .
\end{equation*}
Since $x$ is in the relative interior of $\Delta(E)$, the point $x'$ is in the relative interior of $\Delta(E)$ as well, by the construction of $\Sigma(\calX)$ as a geometric realization of the polysimplicial complex associated to $\calX$ (see Proposition~\ref{prop_skel=sympow}). In particular, we have $E=E'$ and $x=x'$, since the restriction of $\trop_{f_{(E)}}$ to $\Delta(E)\subseteq\Sigma(\calX)$ is injective. 

In general, if $\calX$ is a only polystable model, we may apply a barycentric subdivision to $\Sigma(\calX)$ that corresponds to a toroidal modification of $\calX$ over $R$ making it strictly polystable. The argument in the strictly polystable case now yields a unimodular faithful tropicalization of the barycentric subdivision. Since the barycentric subdivision of $\Sigma(\calX)$ is homeomorphic to $\Sigma(\calX)$ we find that that $\Sigma(\calX)$ maps homeomorphically onto its image in $\R^n$.
\end{proof}

\section{Open questions}

\subsection{Effective faithful tropicalization via linear series}\label{section_faithfultropvialinearseries}
Let $L$ be a line bundle. For $g\geq 0$ set
\begin{equation*}
    t(g)=\begin{cases} 1 & \textrm{ if } g=0 \\
    3 &\textrm{ if } g=1\\
    3g-1 &\textrm{ if } g\geq 2 \ .
    \end{cases}
\end{equation*}

In \cite{KawaguchiYamaki}, Kawaguchi and Yamaki show that, if $\deg L\geq t(g)$, then there are sections $s_0,\ldots, s_r\in H^0(X,L)$ such that the associated map 
\begin{equation*}
    \begin{split}
        X&\longrightarrow \PP^r\\
        x&\longmapsto \big(s_0(x),\ldots, s_r(x)\big)
    \end{split}
\end{equation*}
induces a tropicalization map
\begin{equation*}\begin{split}
    \trop_{(s_0,\ldots,s_r)}\colon X^{an}&\longrightarrow \TT\PP^r\\
    x&\longmapsto \big(-\log\vert s_0\vert_x,\ldots, -\log\vert s_r\vert_x\big)
\end{split}\end{equation*}
that is faithful on the skeleton $\Gamma$ of $X$, i.e. that restricts to a piecewise integer linear and unimodular map on $\Gamma$.

Let $L$ be an $(d-1)$-ample line bundle on $X$. There is a natural map 
\begin{equation*}
X_d\longrightarrow \Gr\big(d,H^0(X,L)^\ast\big)
\end{equation*}
into the Grassmannian of $d$-dimensional quotients of $H^0(X,L)$ that is given by associating to an effective divisor $D$ on $X$ the surjective restriction map
\begin{equation*}
H^0(X,L)\twoheadrightarrow H^0(X,L\otimes \calO_D) \ .
\end{equation*}
If $L$ is $d$-ample, this map is an injection. 

Choosing global sections $s_0,\ldots, s_r\in H^0(X,L)$ we find a map 
\begin{equation*}
    X_d\longrightarrow \Gr(d,r)
\end{equation*}
that on the open locus $X_d^\circ$ (parametrizing reduced divisors on $X$) is given by sending a split reduced effective divisor $D=\sum_{i=1}^d p_i$ to the linear space spanned by the $p_i$. If we compose this with the Pl\"ucker embedding, we obtain a map $X_d\rightarrow \PP^N$ with $N=\binom{r}{k}-1$ such that the vanishing of the Pl\"ucker coordinates precisely describes the locus of non-reduced divisors. In other words, we have $X_d\cap\G_m^N=X_d^\circ$. 

Expanding on the work of Kawaguchi and Yamaki \cite{KawaguchiYamaki}, one might be tempted to ask the following:

\begin{question}
Suppose that $L$ is a line bundle on $X$ that is $d$-ample.
Under which conditions is there a basis $s_0,\ldots, s_r$ of $H^0(X,L)$ such that the induced tropicalization map 
\begin{equation*}
X_d^{\circ, an}\hooklongrightarrow \Gr^\circ\big(d,r\big)^{an}\hooklongrightarrow \G_m^{N,an}\xlongrightarrow{\trop_{\G_m^{N}}}\R^N
\end{equation*}
is faithful on the skeleton $\Sigma(X_d)\simeq \Gamma_d$? 
\end{question}

One can think of the desired condition as a tropical analogue of $d$-ampleness.

\subsection{de Jonqui\'{e}res divisors}\label{section_deJonquieres}

We now discuss de Jonqui\'{e}res divisors, and we begin with a discussion of the classical case.
Let $X$ be a smooth projective curve with genus $g$.
Consider a fixed complete linear series $l = (L,V)$ of degree $d$ and dimension $r$. Then a \emph{de Jonqui\'{e}res divisor of length $N$} is a divisor $a_1 D_1 + \cdots + a_k D_k \in X_d$ contained in $\mathbb{P}V$ that fulfills $\sum_{i=1}^k \deg(D_i) = N$. These are studied extensively in \cite{Ungureanu_deJonquieres}. If $\mu_1 = (a_1, \ldots, a_k)$ and $\mu_2=(d_1, \ldots, d_k)$ are positive partitions such that $\sum_{i=1}^k a_i d_i = d$, then we denote the set of de Jonqui\'{e}res divisors of length $N$ determined by $\mu_1$ and $\mu_2$ by $DJ_{k,N}^{r,d}(\mu_1,\mu_2,C,l)$. In \cite{Ungureanu_deJonquieres}, the author proves that for general curves, if $N-d+r \geq 0$, then $DJ_{k,N}^{r,d}(\mu_1,\mu_2,C,l)$ has the expected dimension $N-d+r$. In particular, when $N - d + r < 0$, the variety $DJ_{k,N}^{r,d}(\mu_1,\mu_2,C,l)$ is empty. 

One may wonder whether this result remains true tropically. The following proposition addresses the emptiness result and would imply its algebraic counterpart.


\begin{proposition}
Let $\Gamma$ be a generic chain of loops and $K$ its canonical divisor, so $d = 2g-2$ and $r = g-1$. If $n$ is such that $n + d -r < 0$, then $|K|$ does not contain a divisor of the form $d_1 p_1 + \cdots d_n p_n$. 
\end{proposition}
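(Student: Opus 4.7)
With the conventions in the statement, $d=2g-2$ and $r=g-1$, so the hypothesis $n+d-r<0$ simplifies to $n<1-g$. Since $g\geq 1$ throughout the paper and the displayed divisor $d_1p_1+\cdots+d_np_n$ has $n\geq 1$ terms, no admissible $n$ exists. Even allowing $n=0$ only produces the empty sum, which has degree $0\neq 2g-2$ for $g\geq 2$ and hence is not in $|K|$, while for $g=1$ the hypothesis already forces $n<0$. In all cases the statement therefore holds for the trivial reason that the hypothesis admits no solutions, and the plan for writing the proof up is simply to unpack $d-r=g-1$ once and record the vacuous conclusion.

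For context, the substantive content one would expect by analogy with Ungureanu's algebraic emptiness regime $N-d+r<0$ (length $N=n$ here) corresponds to the bound $n<g-1$. To prove that form of the statement, my plan would be to identify $|K|$ with the fiber $(\alpha_d^{trop})^{-1}([K_\Gamma])$ of the tropical Abel-Jacobi map via Theorem~\ref{thm_AbelJacobi}, so that the question becomes whether $[K_\Gamma]$ lies in
\begin{equation*}
\alpha_d^{trop}\bigl(DJ_n(\mu)\bigr)\subseteq\Pic_d(\Gamma), \qquad DJ_n(\mu)=\bigl\{d_1p_1+\cdots+d_np_n:p_i\in\Gamma\bigr\},
\end{equation*}
for some admissible multiplicity vector $\mu=(d_1,\ldots,d_n)$ with $\sum d_i=2g-2$ and $d_i\geq 1$. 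Each $\alpha_d^{trop}(DJ_n(\mu))$ is the continuous image of a polyhedral complex of dimension $\leq n$, hence has polyhedral dimension $\leq n<g-1<g=\dim\Pic_d(\Gamma)$.

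The main obstacle in the substantive version is then to certify genericity: to verify that $[K_\Gamma]$ actually avoids every $\alpha_d^{trop}(DJ_n(\mu))$ for a generic chain of loops. I would exploit the explicit description of $\Pic_d(\Gamma)$ as a real torus whose coordinates depend nontrivially on all $2g$ edge-length parameters $(l_i,m_i)$, so that the locus of parameter choices for which $[K_\Gamma]\in \alpha_d^{trop}(DJ_n(\mu))$ is cut out by nontrivial conditions on these parameters and is a proper subset of the parameter space. Since there are only finitely many admissible $\mu$, the union over $\mu$ remains a proper subvariety, and its complement -- an enhancement of the genericity notion of \cite[Definition~4.1]{CoolsDraismaPayneRobeva} -- supplies the class of generic chains of loops on which the emptiness holds.
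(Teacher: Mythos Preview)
You are right that the inequality $n+d-r<0$, taken literally with $d=2g-2$ and $r=g-1$, reduces to $n<1-g$ and is vacuous; the paper evidently intends the de Jonqui\`eres emptiness condition $n-d+r<0$, i.e.\ $n<g-1$, and its own proof immediately passes to that bound. So your first paragraph is a correct (if trivial) proof of the statement as typeset, but not of the statement the authors mean.

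For the substantive bound $n<g-1$, the paper's argument is quite different from yours and considerably shorter. It does not go through the Abel--Jacobi map or any dimension count. Instead it uses two explicit facts about the generic chain of loops: the canonical divisor $K_\Gamma=\sum_{i=1}^{g-1}2v_i$ is supported on exactly the $g-1$ interior (degree-$4$) vertices, and the CDPR genericity condition forces every effective divisor linearly equivalent to $K_\Gamma$ to have support of size at least $g-1$ as well. Hence no divisor in $|K_\Gamma|$ can be written with fewer than $g-1$ distinct points, and the claim follows. This exploits the special combinatorics of $K_\Gamma$ on this particular graph and the existing CDPR genericity notion without modification.

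Your Abel--Jacobi route, by contrast, only shows that $\alpha_d^{trop}(DJ_n(\mu))$ has dimension strictly less than $g=\dim\Pic_d(\Gamma)$; it does not establish that the specific class $[K_\Gamma]$ lies outside it. The step where you vary edge lengths to make $[K_\Gamma]$ generic is the real gap: both $[K_\Gamma]$ and the loci $\alpha_d^{trop}(DJ_n(\mu))$ move with the edge lengths, so one must actually check that the incidence condition is nontrivial in parameters, and you would end up proving the result only for an a priori smaller ``enhanced'' generic locus rather than for the CDPR-generic chain of loops as stated. The paper's structural argument avoids this entirely.
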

\begin{proof}
The canonical divisor has degree $2g-2$ and rank $g-1$. Therefore, in order for $n + d -r < 0$ to hold, we must have $n < g-1$. The canonical divisor is supported on $g-1$ vertices, and because of the genericity condition, any divisor equivalent to the canonical divisor will have at least that many vertices in its support. Therefore, there is no divisor of the form $d_1 p_1 + \cdots d_n p_n$ when $n<g-1$. 
\end{proof}

However, unlike in the classical case, the result does not hold for all divisors. We give the following example.

\begin{example}
Consider the length 2 generic chain of loops, and let $p$ be the middle vertex. Let $D = K + p$. Then the rank of $D$ is 1 (because there is a divisor $p_1 + p_2$, with each point coming from a separate loop, such that $D-D'$ is not effective), and so if $n = 1$ then $n-d+1 = -1<0$. So, in the classical case we would expect there to be no divisor in $|D|$ of the form $3q$ for $q \in \Gamma$. However, in this case $D = 3p$.
\end{example}


\bibliographystyle{amsalpha}
\bibliography{biblio}{}

\end{document}